\newtheorem{theorem}{Theorem}[section]
\newtheorem{lemma}[theorem]{Lemma}
\newtheorem{proposition}[theorem]{Proposition}
\newtheorem{claim}[theorem]{Claim}
\theoremstyle{definition}
\newtheorem{definition}[theorem]{Definition}
\newtheorem{example}[theorem]{Example}
\newtheorem{remark}[theorem]{Remark}
\newtheorem{notation}[theorem]{Notation}
\newenvironment{proofclaim}{\paragraph{\emph{Proof of the Claim}.}}{\hfill$\qed$\\}
\newcommand{\func}[1]{\operatorname{#1}}
\newcommand{\thu}{{\twoheaduparrow}}
\def\int{{\sf int}}
\newcommand\cl{{\sf cl}}
\newcommand{\dv}[1]{\mathfrak{#1}}
\newcommand\RO{\mathcal{RO}}
\newcommand\dev{{\sf DeV}}
\newcommand\deve{{\sf DeVe}}
\newcommand\Mdeve{{\sf MDeVe}}
\newcommand\C{{\sf Comp}}
\newcommand\SC{{\sf SComp}}
\newcommand\creg{{\sf CReg}}
\newcommand\KHaus{{\sf KHaus}}
\newcolumntype{x}[1]{>{\raggedright\arraybackslash}p{#1}}
\begin{document}

\title{De Vries duality for compactifications and completely regular spaces}
\author{G.~Bezhanishvili, P.~J.~Morandi, B.~Olberding}
\date{}

\subjclass[2010]{54D15; 54D30; 54D35; 54E05}
\keywords{Compact Hausdorff space; completely regular space; compactification; proximity; de Vries duality}

\begin{abstract}
De Vries duality yields a dual equivalence between the category of compact Hausdorff spaces and a category of complete Boolean algebras with a proximity relation on them, known as de Vries algebras. We extend de Vries duality to completely regular spaces by replacing the category of de Vries algebras with certain extensions of de Vries algebras. This is done by first formulating a duality between compactifications and de Vries extensions, and then specializing to the extensions that correspond to Stone-\v{C}ech compactifications.
\end{abstract}

\maketitle

\section{Introduction}

As fundamental objects of study in topology, completely regular spaces have a long and interesting history. It is a celebrated result of Tychonoff that a space is completely regular iff it is homeomorphic to a subspace of a compact Hausdorff space, and hence admits a compactification (see, e.g., \cite[Secs.~3.2 and 3.5]{Eng89}). A well-known theorem of Smirnov asserts that compactifications of a completely regular space $X$ can be described ``internally" by means of proximities on $X$ compatible with the topology on $X$ (see, e.g., \cite[Sec.~7]{NW70}). Proximities are binary relations on the powerset of $X$ satisfying certain natural axioms, including a point-separation axiom (see, e.g., \cite[Sec.~3]{NW70}). It was shown by de Vries \cite{deV62} that they can alternatively be described as binary relations on the regular open subsets of $X$ satisfying the same axioms with the notable exception that the point-separation axiom is replaced by the point-free axiom asserting that the proximity relation is approximating. On the one hand, this yields an alternative proof of Smirnov's theorem. On the other hand, it provides an algebraic description of the category $\KHaus$ of compact Hausdorff spaces by establishing that $\KHaus$ is dually equivalent to the category of complete Boolean algebras with a proximity relation on them, known as de Vries algebras.

Our goal in this article is to extend de Vries duality from the setting of compact Hausdorff spaces to that of compactifications of completely regular
spaces. This we do by introducing a category $\C$ whose objects are compactifications $e : X \rightarrow Y$, where $X$ is a completely regular space,
$Y$ is compact Hausdorff, and $e$ is a dense embedding. For compactifications $e:X \rightarrow Y$ and $e':X' \rightarrow Y'$, a morphism in this
category is given by pairs $(f,g)$ of continuous maps such that the following diagram commutes.
\[
\xymatrix@C5pc{
X \ar[r]^{e} \ar[d]_{f} & Y \ar[d]^{g} \\
X' \ar[r]_{e'} & Y'
}
\]
When $X = X'$, the compactifications $e$ and $e'$ can be isomorphic in $\C$ but not equivalent in the classical sense. (Interestingly, however,
equivalence and isomorphism do coincide for the Stone-\v{C}ech compactfication;
see Theorem~\ref{prop: equivalent to beta}.) Our interest in the category $\C$ is that
it allows more flexibility than the classical situation in that we can vary the base space $X$ of a compactification rather than just its target $Y$.
One of our main results, Theorem~\ref{thm: duality}, is that just as the category $\KHaus$ has (via de Vries duality) as its algebraic counterpart
the category $\dev$ of de Vries algebras and de Vries morphisms, the category $\C$ has  as its algebraic counterpart a category whose objects are
certain  de Vries morphisms in the category $\dev$.  Much of the work in this article involves axiomatizing these dual morphisms and showing that
the proposed framework does indeed yield a dual equivalence of categories.

To formalize this idea, we naturally rely on de Vries duality, but more is required: Given a compactification $e:X \rightarrow Y$, since $Y$ is an
object in ${\KHaus}$, de Vries duality produces a corresponding de Vries algebra $(\RO(Y),\prec)$, where $\RO(Y)$ is the complete Boolean algebra
of regular open subsets of $Y$ and $\prec$ is the canonical proximity on $\RO(Y)$
(see Section 2 for details). To deal in turn with $X$, observe that even though $X$ need not be in ${\KHaus}$, we can still view the complete
Boolean algebra $\RO(X)$ as a de Vries algebra by pulling back via $e^{-1}$ the proximity $\prec$ on $\RO(Y)$ to a proximity $\prec_Y$ on $\RO(X)$.
Thus, a first candidate for an algebraic dual to $e:X \rightarrow Y$ is the pullback map $e^{-1}:(\RO(Y),\prec) \rightarrow (\RO(X),\prec_Y)$.
However, this idea is too coarse to be useful:
the pullback map is actually a Boolean algebra isomorphism (see Lemma~\ref{lem:RO(X)}), which thus collapses information about $X$. Moreover,
the de Vries dual of $(\RO(X),\prec_Y)$ is $Y$, not $X$. Consequently, a double application of de Vries duality to the base space and its target
falls short of what we want.

These obstacles suggest a different approach is needed. Our way of dealing with this is to continue to use de Vries duality to deal with the
targets $Y$ and $Y'$ of the compactifications $e:X \rightarrow Y$ and $e':X' \rightarrow Y'$, but to work with Tarski duality for complete and
atomic Boolean algebras when dealing with the base spaces $X$ and $X'$. In other words, the algebraic dual we propose for $e:X \rightarrow Y$ is
$e^{-1}:(\RO(Y),\prec) \rightarrow (\wp(X),\subseteq)$, where $\wp(X)$ is the powerset of $X$. An additional subtlety here is that $e^{-1}$ need
not be a homomorphism of Boolean algebras, but this causes no difficulty in our setting since it is a morphism in the category of de Vries algebras.

To develop this idea into a functor, we define a category $\deve$ consisting of what we call ``de Vries extensions.'' These are 1-1 de Vries
morphisms $\alpha:\dv{A} \rightarrow \dv{B}$ of de Vries algebras $\dv{A}$ and $\dv{B}$ such that $\alpha[\dv{A}]$ is join-meet dense in
$\dv{B}$ and $\dv{B}$ is a complete and atomic Boolean algebra whose proximity is given by the partial order on $\dv{B}$ (see Section~4).
With morphisms of de Vries extensions  defined in an obvious way, we obtain the category $\deve$, and we prove in Theorem~\ref{thm: duality}
that $\KHaus$ is dually equivalent to $\deve$. Implicit in this dual equivalence is an algebraic axiomatization  of the de Vries morphisms
that arise as $e^{-1}:(\RO(Y),\prec) \rightarrow (\wp(X),\subseteq)$ for some compactification $e:X \rightarrow Y$.

Focusing next on completely regular spaces rather than their compactifications, we also obtain a duality for these spaces, and in so doing
extend de Vries duality and Tarksi duality from the category ${\KHaus}$ and the category of discrete spaces, respectively, to the category
of completely regular spaces.  This is done by first observing that the category of completely regular spaces can be identified as a full
subcategory of the category of compactifications by considering Stone-\v{C}ech compactifications. We prove that dually these correspond to
maximal de Vries extensions, thus yielding a duality for completely regular spaces that generalizes de Vries duality and Tarski duality
(see Remark~\ref{dualities remark}).

The article is organized as follows. In Section~2 we recall de Vries algebras and de Vries duality. In Section~3 we recall some classical
facts about compactifications and introduce the category $\C$ of compactifications. In Section 4, after recalling Tarski duality, we introduce
de Vries extensions and the category $\deve$ of de Vries extensions. In Section~5 we prove that $\C$ and $\deve$ are dually equivalent, thus
generalizing de Vries and Tarski dualities. Finally, in Section~6 we define maximal de Vries extensions and prove that under the duality result
of Section~5 they correspond to Stone-\v{C}ech compactifications. From this we derive that the category $\creg$ of completely regular spaces is
dually equivalent to the full subcategory $\Mdeve$ of $\deve$ consisting of maximal de Vries extensions.

In the followup paper \cite{BMO18b} we illustrate the utility of
our point of view by showing how
to obtain algebraic counterparts of normal and locally compact Hausdorff spaces in the form of de Vries extensions that are subject to additional
axioms which encode the desired topological property. As a further application, we show that a duality for locally compact Hausdorff spaces due to
Dimov \cite{Dim10} can be obtained from our perspective.

\section{De Vries duality} \label{sec: de Vries duality}

Let $\KHaus$ be the category of compact Hausdorff spaces and continuous maps. In \cite{deV62} de Vries described a category dual to $\KHaus$. The objects of this category are now known as de Vries algebras and its morphisms as de Vries morphisms. In this section we briefly review de Vries duality, and refer the interested reader to \cite[Ch.~I]{deV62} for more details.

\begin{definition}\label{def: DeV}
\begin{enumerate}
\item[]
\item A binary relation $\prec$ on a Boolean algebra $A$ is a \emph{de Vries proximity} provided it satisfies the following axioms:
\begin{enumerate}
\item[(DV1)] $1\prec 1$.
\item[(DV2)] $a\prec b$ implies $a\le b$.
\item[(DV3)] $a\le b\prec c\le d$ implies $a\prec d$.
\item[(DV4)] $a\prec b,c$ implies $a\prec b\wedge c$.
\item[(DV5)] $a\prec b$ implies $\neg b\prec \neg a$.
\item[(DV6)] $a\prec b$ implies there is $c$ such that $a\prec c\prec b$.
\item[(DV7)] $b\ne 0$ implies there is $a \ne 0$ such that $a\prec b$.
\end{enumerate}
\item A \emph{de Vries algebra} is a pair $\mathfrak A=(A,\prec)$, where $A$ is a complete Boolean algebra and $\prec$ is a de Vries proximity on $A$.
\end{enumerate}
\end{definition}

\begin{remark}
It follows from the definition that $0 \prec 0$ and $a,b \prec c$ implies $a \vee b \prec c$.
In addition, the axiom (DV7) can equivalently be stated as $b = \bigvee\{a\in A \mid a\prec b\}$, and asserts that the proximity $\prec$ is approximating.
\end{remark}

\begin{definition}\label{def: DeV morphism}
A map $\rho: \dv{A} \to \dv{B}$ between de Vries algebras is a \emph{de Vries morphism} provided
\begin{enumerate}
\item[(M1)] $\rho(0)=0$.
\item[(M2)] $\rho(a\wedge b)=\rho(a)\wedge\rho(b)$.
\item[(M3)] $a\prec b$ implies $\neg\rho(\neg a)\prec\rho(b)$.
\item[(M4)] $\rho(b) = \bigvee \{\rho(a)\mid a\prec b\}$.
\end{enumerate}
\end{definition}

\begin{remark} \label{rem: morphism properties}
Each de Vries morphism $\rho$ satisfies:
\begin{enumerate}
\item $\rho(1) = 1$;
\item $\rho(\neg a)\le\neg\rho(a)$;
\item $\rho(a)\le\neg\rho\neg(a)$;
\item $a \prec b$ implies $\rho(a) \prec \rho(b)$.
\end{enumerate}
Thus, $\rho$ is a bounded meet-semilattice homomorphism that preserves $\prec$. However, $\rho$ does not preserve join or negation in general.
Axiom (M3) is equivalent to the following axiom: $a_1 \prec b_1$ and $a_2\prec b_2$ imply $\rho(a_1 \vee a_2) \prec \rho(b_1) \vee \rho(b_2)$
\cite[Lem.~2.2]{Bez12}.
\end{remark}

Function composition of two de Vries morphisms need not be a de Vries morphism because it need not satisfy (M4). To repair this, the composition of two de Vries morphisms $\rho_1:\dv{A}_1\to \dv{A}_2$ and $\rho_2: \dv{A}_2 \to \dv{A}_3$ is defined by
\[
(\rho_2 \star \rho_1)(b)=\bigvee\{\rho_2\rho_1(a)\mid  a\prec b\}.
\]
With this composition, de Vries algebras and de Vries morphisms form a category.

\begin{definition}
Let $\dev$ be the category of de Vries algebras and de Vries morphisms, where composition is defined as above.
\end{definition}

\begin{remark} \label{rem: composition}
\begin{enumerate}
\item[]
\item As pointed out in Remark~\ref{rem: morphism properties}, a de Vries morphism is not a Boolean homomorphism in general. Nevertheless, isomorphisms in $\dev$ are Boolean isomorphisms which preserve and
reflect proximity.
\item While composition in $\dev$ is not function composition in general, if $\rho_1 : \dv{A}_1 \to \dv{A}_2$ and $\rho_2 : \dv{A}_2 \to \dv{A}_3$
are de Vries morphisms such that $\rho_2$ is a complete Boolean homomorphism, then it follows from the definition of $\star$ that
$\rho_2 \star \rho_1 = \rho_2 \circ \rho_1$. We will use this throughout the paper.
\end{enumerate}
\end{remark}

By de Vries duality, $\KHaus$ is dually equivalent to $\dev$. We briefly describe the functors that yield this dual equivalence. For $X \in \KHaus$, let $X^* = (\mathcal{RO}(X),\prec)$, where $\mathcal{RO}(X)$ is the complete Boolean algebra of regular open subsets of $X$ and $\prec$ is the
canonical proximity on $\RO(X)$. The Boolean operations on $\mathcal{RO}(X)$ are given by
\begin{itemize}
\item $\bigvee_i U_i = {\int}\left({\cl}\left(\bigcup_i U_i\right)\right)$,
\item $\bigwedge_i U_i = {\int}\left(\bigcap_i U_i\right)$,
\item $\lnot U = {\int}(X\backslash U)$;
\end{itemize}
and the canonical proximity is given by
\[
U\prec V \mbox{ iff } {\cl}(U)\subseteq V.
\]
Then $X^* \in \dev$. If $f:X\to Y$ is a continuous map, then $f^*: Y^*\to X^*$ is given by
\[
f^*(U)={\int}\left({\cl}\left(f^{-1}(U)\right)\right).
\]
We have that $f^*$ is a de Vries morphism. This yields a contravariant functor $(-)^* : \KHaus \to \dev$.

To define a contravariant functor $(-)_* : \dev \to \KHaus$, let $\dv{A} \in \dev$. For $S\subseteq \dv{A}$, let $\thu S = \{a\in \dv{A} \mid  b\prec a \mbox{ for some } b\in S\}$. We call a filter $F$ of $\dv{A}$ \emph{round} if $\thu F=F$. An \emph{end} is a maximal proper round filter. Let $Y_\dv{A}$ be the set of ends of $\dv{A}$. For $a\in \dv{A}$, let $$\zeta_\dv{A}(a)=\{x\in Y_\dv{A} \mid a\in x\}.$$ Define a topology on $Y_\dv{A}$ by letting
$$
\zeta_\dv{A}[\dv{A}]=\{\zeta_\dv{A}(a) \mid a\in \dv{A}\}
$$
be a basis for the topology. The space $Y_\dv{A}$ is compact Hausdorff and $\zeta_\dv{A} : \dv{A} \to \RO(Y_\dv{A})$ is a de Vries isomorphism.

If $\rho : \dv{A} \to \dv{A}'$ is a de Vries morphism, we define $\rho_* : Y_\dv{A'} \to Y_\dv{A}$ by $\rho_*(y) = \thu \rho^{-1}(y)$. Then $\rho_*$ is continuous. This yields a contravariant functor $(-)_* : \dev \to \KHaus$.

As we already pointed out, $\zeta_{\dv{A}} : \dv{A} \to (\dv{A}_*)^* $ is a de Vries isomorphism. Moreover, $\xi_X : X \to (X^*)_*$ is a homeomorphism, where $\xi(x) = \{ U \in X^* \mid x \in U\}$. Therefore, $\zeta : 1_\dev \to (-)^* \circ (-)_*$ and $\xi : 1_\KHaus \to (-)_* \circ (-)^*$ are natural isomorphisms. Thus, we arrive at de Vries duality.

\begin{theorem}[de Vries duality]
The functors $(-)^*$ and $(-)_*$ give a dual equivalence between $\KHaus$ and $\dev$.
\end{theorem}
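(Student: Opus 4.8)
The theorem collects four assertions: that $(-)^* : \KHaus \to \dev$ and $(-)_* : \dev \to \KHaus$ are contravariant functors, and that the families $\zeta$ and $\xi$ constructed above are natural isomorphisms. Almost all of the pointwise content has already been recorded in the preceding discussion — $X^* \in \dev$; $f^*$ is a de Vries morphism for continuous $f$; $Y_\dv{A}$ is compact Hausdorff; $\rho_*$ is continuous; each $\zeta_\dv{A}$ is a de Vries isomorphism and each $\xi_X$ a homeomorphism — so the plan is to supply the two remaining pieces: (i) functoriality of $(-)^*$ and $(-)_*$ on morphisms, i.e.\ preservation of identities and of composition; and (ii) naturality of $\zeta$ and $\xi$. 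Granting (i) and (ii), the theorem follows from the recorded pointwise facts.

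For $(-)^*$, preservation of identities is immediate, since $(1_X)^*(U) = \int\cl(U) = U$ on regular opens. Preservation of composition requires attention because composition in $\dev$ is the twisted operation $\star$, not function composition. The key observation I would establish first is that for continuous $f$ the de Vries morphism $f^*$ preserves \emph{arbitrary} joins: writing joins in $\RO$ as $\int\cl$ of unions and using that $f^{-1}$ commutes with unions, that $f^{-1}$ of an open set is open, and that $f^{-1}(\cl A) \subseteq \cl f^{-1}(A)$, one checks $f^*\!\left(\bigvee_i U_i\right) = \int\cl\bigcup_i f^{-1}(U_i) = \bigvee_i f^*(U_i)$. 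A de Vries morphism $\rho_2$ that preserves arbitrary joins satisfies $\rho_2 \star \rho_1 = \rho_2\circ\rho_1$ directly from the definition of $\star$ and axiom (M4), so $f^* \star g^* = f^*\circ g^*$; it then remains to see $(g\circ f)^* = f^*\circ g^*$, which is the routine identity $\int\cl f^{-1}(g^{-1}(W)) = \int\cl f^{-1}\!\left(\int\cl g^{-1}(W)\right)$, again because $g^{-1}(W)$ is open. Hence $(-)^*$ is a contravariant functor.

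For $(-)_*$, preservation of identities holds because ends are round filters, so $(1_\dv{A})_*(y) = \thu y = y$. Preservation of composition is the more substantive point: for de Vries morphisms $\rho_1,\rho_2$ and an end $z$ one must prove the filter identity $\thu(\rho_2\star\rho_1)^{-1}(z) = \thu\rho_1^{-1}\!\left(\thu\rho_2^{-1}(z)\right)$. This is verified from the definition of $\star$, axiom (M4), and the roundness and maximality of ends; along the way one also confirms that $\thu\rho^{-1}(z)$ is genuinely an end, using (M3) to obtain a proper round filter and (DV7) for maximality. This gives $(\rho_2\star\rho_1)_* = (\rho_1)_*\circ(\rho_2)_*$, so $(-)_*$ is a contravariant functor.

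It remains to check naturality. For $\xi$ one must show, for every continuous $f : X\to Y$, that $\xi_Y\circ f = (f^*)_*\circ\xi_X$; unwinding, the right-hand side sends $x$ to $\thu\{V\in\RO(Y)\mid x\in f^*(V)\}$, and one shows this equals the end $\xi_Y(f(x)) = \{V\in\RO(Y)\mid f(x)\in V\}$, using that the latter is round together with (DV6). Dually, for $\zeta$ one verifies that the square formed by $\zeta_\dv{A}$, $\zeta_\dv{B}$ and $(\rho_*)^*$ commutes, again by unwinding the definitions of $\zeta$, $\rho_*$ and $(-)^*$ and invoking (M4) and roundness. Together with the already-noted facts that each $\zeta_\dv{A}$ and each $\xi_X$ is an isomorphism, this shows $\zeta : 1_\dev\to(-)^*\circ(-)_*$ and $\xi : 1_\KHaus\to(-)_*\circ(-)^*$ are natural isomorphisms, completing the dual equivalence. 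I expect the main obstacle to be the bookkeeping around the twisted composition $\star$ — both the reduction $f^*\star g^* = f^*\circ g^*$ and the filter identity for $(\rho_2\star\rho_1)_*$ — rather than any isolated topological computation; the naturality diagram chases, while fiddly, are essentially forced once the functors are in place.
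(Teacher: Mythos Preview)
The paper does not prove this theorem: it is stated as background (de Vries's result from \cite{deV62}) immediately after the description of the functors and the components $\zeta_\dv{A}$, $\xi_X$, with no proof environment. So there is no ``paper's own proof'' to compare against.

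That said, your outline is a sound way to fill the gap. One point worth flagging: your reduction $f^*\star g^* = f^*\circ g^*$ via ``$f^*$ preserves arbitrary joins'' is correct, and indeed the paper implicitly uses exactly this functoriality later (Proposition~\ref{prop: functor E} invokes $g^* = g_1^*\star g_2^*$ ``by de Vries duality''). Note, however, that the paper's own tool for collapsing $\star$ to $\circ$ is Remark~\ref{rem: composition}(2), which requires $\rho_2$ to be a complete Boolean \emph{homomorphism}; your argument is slightly different and slightly sharper, since $f^*$ preserves arbitrary joins but is generally not a Boolean homomorphism. The other pieces---functoriality of $(-)_*$, naturality of $\xi$ and $\zeta$---are where the real work lies, and your sketch correctly identifies the bookkeeping around $\star$ and roundness of ends as the substantive content.
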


\section{Compactifications} \label{sec: compactifications}

Our goal is to extend de Vries duality from compact Hausdorff spaces to completely regular spaces. This we do by utilizing the theory of compactifications of completely regular spaces (see, e.g., \cite[Ch.~3.5]{Eng89}). We recall that a \emph{compactification} of a completely regular space $X$ is a pair $(Y,e)$, where $Y$ is a compact Hausdorff space and $e:X\to Y$ is an embedding such that the image $e(X)$ is dense in $Y$.

Suppose that $e : X \to Y$ and $e' : X \to Y'$ are compactifications. As usual, we write $e \le e'$ provided there is a continuous map $f : Y' \to Y$
with $f \circ e' = e$.

\[
\xymatrix{
X \ar[r]^{e'} \ar[rd]_{e} & Y' \ar[d]^{f} \\
& Y
}
\]
The relation $\le$ is reflexive and transitive. Two compactifications $e$ and $e'$ are \emph{equivalent} if $e \le e'$ and $e' \le e$. It is well known that $e$ and $e'$ are equivalent iff there is a homeomorphism $f : Y'\to Y$ with $f \circ e' = e$. The equivalence classes of compactifications of $X$ form a poset whose largest element is the Stone-\v{C}ech compactification $s : X \to \beta X$.

In the classical setting, one considers compactifications of a fixed base space $X$. For our purposes we need to vary the base space. The proof of the following proposition is straightforward.

\begin{proposition}
There is a category $\C$ whose objects are compactifications $e:X\to Y$ and whose morphisms are pairs $(f,g)$ of continuous maps such that
the following diagram commutes.
\[
\xymatrix@C5pc{
X \ar[r]^{e} \ar[d]_{f} & Y \ar[d]^{g} \\
X' \ar[r]_{e'} & Y'
}
\]
The composition of two morphisms $(f_1,g_1)$ and $(f_2,g_2)$ is defined to be $(f_2\circ f_1, g_2\circ g_1)$.
\[
\xymatrix@C5pc{
X_1 \ar[r]^{e_1} \ar@/_1.5pc/[dd]_{f_2\circ f_1} \ar[d]^{f_1} & Y_1 \ar[d]_{g_1} \ar@/^1.5pc/[dd]^{g_2\circ g_1}\\
X_2 \ar[r]^{e_2} \ar[d]^{f_2} & Y_2 \ar[d]_{g_2} \\
X_3 \ar[r]_{e_3} & Y_3
}
\]
\end{proposition}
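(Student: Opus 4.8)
The plan is to verify the category axioms in the obvious order: first that the proposed composition of morphisms is again a morphism, then that composition is associative, and finally that each object has an identity morphism. Since the objects are compactifications $e : X \to Y$ and morphisms are commuting squares $(f,g)$ with $g \circ e = e' \circ f$, the entire argument is a diagram chase together with the standard fact that continuous maps are closed under composition.

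First I would check closure under composition. Given morphisms $(f_1,g_1) : (e_1 : X_1 \to Y_1) \to (e_2 : X_2 \to Y_2)$ and $(f_2,g_2) : (e_2 : X_2 \to Y_2) \to (e_3 : X_3 \to Y_3)$, the maps $f_2 \circ f_1 : X_1 \to X_3$ and $g_2 \circ g_1 : Y_1 \to Y_3$ are continuous as composites of continuous maps. For the commuting-square condition, one computes
\[
(g_2 \circ g_1) \circ e_1 = g_2 \circ (g_1 \circ e_1) = g_2 \circ (e_2 \circ f_1) = (g_2 \circ e_2) \circ f_1 = (e_3 \circ f_2) \circ f_1 = e_3 \circ (f_2 \circ f_1),
\]
using the defining property of $(f_1,g_1)$ in the second equality and of $(f_2,g_2)$ in the fourth. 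Hence $(f_2 \circ f_1, g_2 \circ g_1)$ is a morphism $(e_1) \to (e_3)$, as displayed in the three-tier diagram in the statement.

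Next I would record that for each object $e : X \to Y$ the pair $(1_X, 1_Y)$ is a morphism (the square commutes since $1_Y \circ e = e = e \circ 1_X$), and that it is a two-sided identity for $\star$-free composition: $(f,g) \circ (1_X,1_Y) = (f \circ 1_X, g \circ 1_Y) = (f,g)$ and similarly on the other side. Associativity of composition reduces to associativity of function composition in each coordinate: $\big((f_3,g_3) \circ (f_2,g_2)\big) \circ (f_1,g_1) = (f_3 \circ f_2 \circ f_1,\, g_3 \circ g_2 \circ g_1) = (f_3,g_3) \circ \big((f_2,g_2) \circ (f_1,g_1)\big)$. This completes the verification that $\C$ is a category.

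There is no real obstacle here; the only point requiring a moment's care is confirming that nothing more is demanded of a morphism than commutativity of the square — in particular, $f$ need not be an embedding nor have dense image, so no side conditions have to be propagated through composition. Once that is noted, the proposition follows immediately, which is why the statement is asserted to be straightforward.
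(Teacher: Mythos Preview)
Your proof is correct and is exactly the routine verification the paper has in mind; the paper itself gives no proof, merely noting that it is straightforward. Your check of closure under composition, identities, and associativity is the obvious (and only) route, so there is nothing to compare.
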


It is straightforward to see that a morphism $(f,g)$ in $\C$ is an isomorphism iff both $f$ and $g$ are homeomorphisms. From this it follows that equivalent compactifications of $X$ are isomorphic in $\C$. The converse is not true in general, as the following example shows.

\begin{example} \label{ex: iso isn't equivalence}
Let $X$ be the set $\mathbb{N}$ of natural numbers equipped with the discrete topology, and let $e : X \to Y$ be the two-point compactification of $X$ where $Y$ is the disjoint union of the one-point compactifications of $2\mathbb{N}$ and $2\mathbb{N}+1$ and $e$ is the inclusion map. Thus, $Y = \mathbb{N} \cup \{\infty_e, \infty_o\}$ where $\cl_Y(2\mathbb{N}) = 2\mathbb{N} \cup \{\infty_e\}$ and $\cl_Y(2\mathbb{N}+1) = (2\mathbb{N}+1) \cup \{\infty_o\}$.

Define $f: X \to X$ as follows: Let $f$ be the identity on $4\mathbb{N}$ and $4\mathbb{N}+1$, and for $n \ge 0$ set $f(4n+2) = 4n+3$ and $f(4n+3) = 4n+2$ (see the diagram).
\begin{center}
\includegraphics[width=4in]{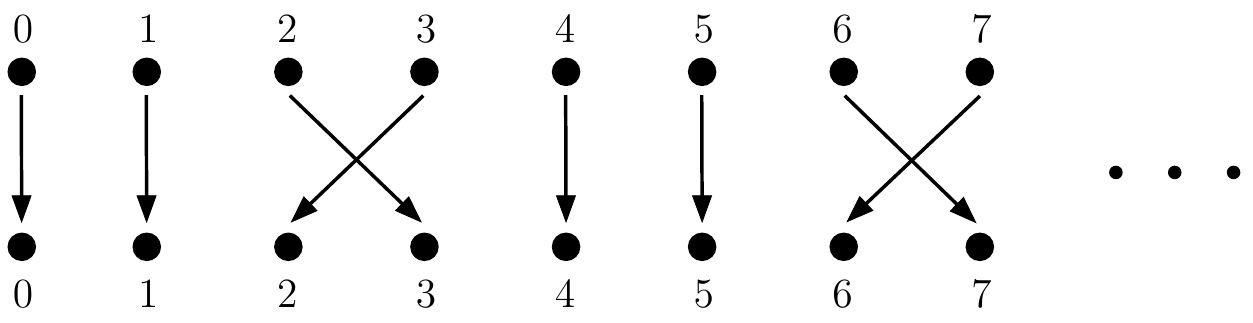}
\end{center}
Then $f$ is a homeomorphism with $f(2\mathbb{N}) = 4\mathbb{N} \cup (4\mathbb{N}+3)$ and $f(2\mathbb{N}+1) = (4\mathbb{N}+1) \cup (4\mathbb{N}+2)$.

Let $e':X\to Y'$ be the two-point compactification of $X$ where $Y'$ is the disjoint union of the one-point compactifications of $f(2\mathbb{N})$ and $f(2\mathbb{N}+1)$ and $e' : X \to Y'$ is the inclusion map. Then $Y' = \mathbb{N} \cup \{\infty_1, \infty_2\}$ where $\infty_1$ is a limit point of $f(2\mathbb{N})$ and $\infty_2$ is a limit point of $f(2\mathbb{N}+1)$.

Define $g : Y \to Y'$ by $g(n) = f(n)$ for each $n \in \mathbb{N}$, $g(\infty_e) = \infty_1$, and $g(\infty_o) = \infty_2$.
A straightforward
argument shows that $g$ is a homeomorphism and the following diagram is commutative.
\[
\xymatrix{
X \ar[r]^{e} \ar[d]_{f} & Y \ar[d]^{g} \\
X \ar[r]_{e'} & Y'
}
\]
Therefore, $(f, g)$ is an isomorphism in $\C$. On the other hand, we show that $e$ and $e'$ are not equivalent.
Suppose that there is a homeomorphism $h : Y \to Y'$ with $h \circ e = e'$.
\[
\xymatrix{
X \ar[r]^{e} \ar[dr]_{e'} & Y \ar[d]^{h} \\
& Y'
}
\]
Let $A = 4\mathbb{N} \cup (4\mathbb{N}+3) = f(2\mathbb{N})$. Then $\cl_Y(A) = A \cup \{\infty_e, \infty_o\}$ because $\infty_e$ is a limit point of $4\mathbb{N}$ and $\infty_o$ is a limit point of $4\mathbb{N}+3$. Since $h$ is a homeomorphism extending the identity on $X$, we have $h(\cl_Y(A)) = \cl_{Y'}(A)$. However, $A = f(2\mathbb{N})$, so $\cl_{Y'}(A) = A \cup \{\infty_1\}$ while $h(\cl_Y(A)) = A \cup \{\infty_1, \infty_2\}$. This contradiction shows that there is no homeomorphism $h$ with $h \circ e = e'$, and hence $e$ and $e'$ are not equivalent.
\end{example}

On the other hand, if a compactification is equivalent to the Stone-\v{C}ech compactification, then the two compactifications are isomorphic in $\C$.

\begin{theorem} \label{prop: equivalent to beta}
Let $e : X \to Y$ be a compactification. If $e$ is isomorphic to $s : X \to \beta X$ in $\C$, then $e$ is equivalent to $s$.
\end{theorem}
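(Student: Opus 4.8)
The plan is to reduce the whole statement to producing a single homeomorphism $\beta X \to Y$ that commutes with the structure maps. Suppose $(f,g)$ is an isomorphism in $\C$ from $e:X\to Y$ to $s:X\to\beta X$; then $f:X\to X$ and $g:Y\to\beta X$ are homeomorphisms and $s\circ f=g\circ e$. By the characterization of equivalence recalled at the beginning of this section, it suffices to produce a homeomorphism $h:\beta X\to Y$ with $h\circ s=e$: from such an $h$ one gets $e\le s$ directly and $s\le e$ via $h^{-1}\circ e=s$, so $e$ and $s$ are equivalent.

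The one subtlety is that $g$ intertwines $e$ not with $s$ but with $s\circ f$, so I must absorb the base-space homeomorphism $f$ into an automorphism of $\beta X$. This is precisely what the universal property of the Stone-\v{C}ech compactification supplies. The continuous map $s\circ f:X\to\beta X$ into a compact Hausdorff space extends uniquely over $s$ to a continuous map $\beta f:\beta X\to\beta X$ with $\beta f\circ s=s\circ f$; likewise $f^{-1}$ induces $\beta(f^{-1})$ with $\beta(f^{-1})\circ s=s\circ f^{-1}$. Uniqueness of extensions then forces $\beta f\circ\beta(f^{-1})=\mathrm{id}_{\beta X}=\beta(f^{-1})\circ\beta f$, so $\beta f$ is a homeomorphism. (Equivalently, one invokes functoriality of $\beta$ and the fact that functors preserve isomorphisms.)

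With this in hand I would set $h=g^{-1}\circ\beta f:\beta X\to Y$. It is a composite of homeomorphisms, hence a homeomorphism, and
\[
h\circ s = g^{-1}\circ\beta f\circ s = g^{-1}\circ s\circ f = g^{-1}\circ g\circ e = e,
\]
which is exactly what was required; hence $e$ is equivalent to $s$. The only real content is the middle step — recognizing that the extra homeomorphism $f$ of the base space can be pushed up to an automorphism of $\beta X$ — and I expect this to be the sole place where the universal property of $\beta X$ (rather than that of an arbitrary compactification, which is what fails for general $Y$, cf.\ Example~\ref{ex: iso isn't equivalence}) is genuinely used; everything else is diagram chasing.
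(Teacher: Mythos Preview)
Your proof is correct and follows essentially the same route as the paper's: both obtain $\beta f$ via the universal property of $\beta X$ (and note it is a homeomorphism by functoriality), then set $h=g^{-1}\circ\beta f$ and verify $h\circ s=e$ by the same chain of equalities. Your additional remark about why this fails for general $Y$ is apt but not needed for the argument itself.
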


\begin{proof}
By hypothesis, there is an isomorphism $(f,g)$ in $\C$ yielding the following commutative diagram.
\[
\xymatrix{
X \ar[r]^{e} \ar[d]_{f} & Y \ar[d]^{g} \\
X \ar[r]_{s} & \beta X
}
\]
This means that both $f$ and $g$ are homeomorphisms. By the universal mapping property for $\beta$ (see, e.g., \cite[Thm.~3.6.1]{Eng89}), there is
$\beta(f) : \beta X \to \beta X$ with $\beta(f) \circ s = s \circ f$.
\[
\xymatrix{
X \ar[r]^s \ar[d]_{f} & \beta X \ar[d]^{\beta(f)} \\
X \ar[r]_s & \beta X
}
\]
Note that $\beta(f)$ is a homeomorphism since $\beta$ is a functor (see, e.g., \cite[Sec.~IV.2.1]{Joh82}) and $f$ is a homeomorphism. Let $h = g^{-1} \circ \beta(f) : \beta X \to Y$. Then $h : \beta X \to Y$ is a homeomorphism.
\[
\xymatrix{
X \ar[r]^{s} \ar[dr]_{e} & \beta X \ar[d]^{h}\\
& Y
}
\]
Moreover, $h \circ s = g^{-1} \circ \beta(f) \circ s = g^{-1} \circ s \circ f = g^{-1}\circ (g \circ e) = e$. Thus, $e : X\to Y$ is equivalent to $s : X \to \beta X$.
\end{proof}

\section{De Vries extensions} \label{sec: de Vries}

In this section we introduce the concept of a de Vries extension, which is the main concept of this article. To motivate the definition,
let $e : X \to Y$ be a compactification of a completely regular space $X$. Since $Y$ is compact Hausdorff, by de Vries duality, we can
associate with $Y$ the de Vries algebra $(\RO(Y),\prec)$. Although $X$ is not necessarily compact, it would be natural to work with the
complete Boolean algebra $\RO(X)$ of regular open subsets of $X$. However, as the next lemma shows, $\RO(X)$ is isomorphic to $\RO(Y)$.

\begin{lemma} \label{lem:RO(X)}
Let $e : X \to Y$ be a compactification. Then $e^{-1} : \RO(Y) \to \RO(X)$ is a Boolean isomorphism.
\end{lemma}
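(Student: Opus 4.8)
The plan is to show $e^{-1}\colon\RO(Y)\to\RO(X)$ is a well-defined bijection that preserves the Boolean operations. First I would check that $e^{-1}$ maps $\RO(Y)$ into $\RO(X)$: if $e$ is an embedding and $U$ is regular open in $Y$, then $e^{-1}(U)$ is open in $X$, and since taking preimages under a continuous map commutes with neither interior nor closure in general, the key point is that $e$ is in fact a dense embedding, so $e$ is (up to homeomorphism) the inclusion of a dense subspace. I would therefore reduce to the case where $X$ is a dense subspace of $Y$ and $e$ is inclusion. Then for any open $V\subseteq Y$ one has $\cl_X(V\cap X)=\cl_Y(V)\cap X$ (density of $X$ and openness of $V$), and likewise $\int_X$ of a set of the form $W\cap X$ with $W$ open in $Y$ behaves well; a short computation then gives that $V\cap X$ is regular open in $X$ whenever $V$ is regular open in $Y$, i.e. $e^{-1}(U)\in\RO(X)$.

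Next I would establish that $e^{-1}$ is a bijection by exhibiting the inverse. The natural candidate is $U\mapsto \int_Y(\cl_Y(U))$ for $U\in\RO(X)$, or equivalently the map sending a regular open $U\subseteq X$ to the unique regular open subset of $Y$ whose trace on $X$ is $U$. Injectivity of $e^{-1}$ follows from density: if $V_1,V_2\in\RO(Y)$ with $V_1\cap X=V_2\cap X$, then $\cl_Y(V_1)=\cl_Y(V_1\cap X)=\cl_Y(V_2\cap X)=\cl_Y(V_2)$ (again using that $X$ is dense and the $V_i$ are open), and hence $V_1=\int_Y\cl_Y(V_1)=\int_Y\cl_Y(V_2)=V_2$. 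For surjectivity, given $U\in\RO(X)$, set $V=\int_Y(\cl_Y(U))$; this is regular open in $Y$, and one checks $V\cap X=U$ using regular openness of $U$ in $X$ together with the density identities above.

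Finally I would verify that $e^{-1}$ preserves the Boolean structure recalled in Section~\ref{sec: de Vries duality}: it clearly preserves finite (indeed arbitrary) meets in the sense that $\bigwedge$ in $\RO$ is interior-of-intersection and preimage commutes with intersection and with $\int$ here, and $\neg$ is interior-of-complement, which is again respected after passing to the trace on the dense subspace; joins then follow since in any Boolean algebra join is definable from meet and complement, or directly from $\bigvee=\int\cl\bigcup$. Together these show $e^{-1}$ is a Boolean isomorphism.

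I expect the main obstacle to be the bookkeeping around interior and closure: preimage under a general continuous map does not commute with $\int$ or $\cl$, so everything hinges on first reducing to the dense-subspace situation and then carefully using the two density facts ($\cl_Y(V\cap X)=\cl_Y(V)$ for open $V$, and $V=\int_Y\cl_Y(V)$ for regular open $V$) to control how regular open sets restrict and extend. Once that reduction is in place the rest is routine.
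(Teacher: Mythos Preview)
Your proposal is correct and follows essentially the same route as the paper: reduce to the dense-subspace inclusion, use the density identity $\cl_Y(V\cap X)=\cl_Y(V)$ for open $V$, verify well-definedness, and establish bijectivity via the inverse $U\mapsto\int_Y(\cl_Y(U))$. The only minor difference is that the paper, rather than checking the Boolean operations directly, simply observes that $e^{-1}$ is an order isomorphism (it preserves and reflects $\subseteq$), which automatically makes it a Boolean isomorphism; this shortcut spares you the computation for $\neg$.
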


\begin{proof}
To simplify notation, we view $X$ as a dense subspace of $Y$. Then the map $e^{-1}$ sends $V$ to $V \cap X$. Since $X$ is dense in $Y$, if $V$ is open in $Y$, then we have ${\cl}_Y(V \cap X) = {\cl}_Y(V)$.

\begin{claim} \label{claim: RO}
If $U $ is regular open in $X$, then $U = \int_Y(\cl_Y(U))\cap X$.
\end{claim}

\begin{proofclaim}
Set $V = {\int}_Y({\cl}_Y(U))$. Since $U$ is open in $X$, we may write $U = W \cap X$ for some open set $W$ in $Y$. As $X$ is dense in $Y$,
we have ${\cl}_Y(U) = {\cl}_Y(W)$. Therefore, $W \subseteq {\int}_Y({\cl}_Y(U)) = V$. This yields $U \subseteq V \cap X$. On the other hand,
${\cl}_X(U) = {\cl}_Y(U) \cap X \supseteq V \cap X$. Thus, $V \cap X \subseteq {\int}_X({\cl}_X(U)) = U$. Consequently, $V \cap X = U$,
and so the claim is verified.
\end{proofclaim}

We next show that if $V \in \RO(Y)$, then $V\cap X \in \RO(X)$. For, let $U = {\int}_X({\cl}_X(V\cap X))$. Then $U \in \RO(X)$ and $V \cap X \subseteq U$. We have
\[
U = {\int}_X({\cl}_X(V\cap X)) \subseteq {\cl}_X(V \cap X) \subseteq {\cl}_Y(V).
\]
Therefore, ${\cl}_Y(U) \subseteq {\cl}_Y(V)$. Thus, $W := {\int}_Y({\cl}_Y(U)) \subseteq {\int}_Y({\cl}_Y(V)) = V$. Now, $W \cap X = U$ by Claim~\ref{claim: RO}. Therefore, $V \cap X \subseteq U = W \cap X \subseteq V \cap X$, and so $U = V \cap X$, yielding that $V \cap X \in \RO(X)$. Thus, $e^{-1} : \RO(Y) \to \RO(X)$ is well defined, and it is onto by Claim~\ref{claim: RO}.

To see $e^{-1}$ is 1-1, let $V$ be regular open in $Y$. Then $V = {\int}_Y({\cl}_Y(V)) = {\int}_Y({\cl}_Y(V\cap X))$. Therefore, if $V,W \in \RO(Y)$ with $V \cap X = W \cap X$, then this formula shows $V = W$. Thus, $e^{-1}$ is a bijection. It clearly preserves order, and this formula shows that it also reflects order. Consequently, $e^{-1}$ is an order isomorphism, hence a Boolean isomorphism.
\end{proof}

Instead of working with $\RO(X)$, we will work with the powerset $\wp(X)$ of $X$, and will utilize Tarski duality between the category $\sf CABA$ of complete and atomic Boolean algebras with complete Boolean homomorphisms and the category $\sf Set$ of sets and functions. If $X$ is a set, then $\wp(X)$ is a complete and atomic Boolean algebra, and if $f : X \to Y$ is a function, then $f^{-1} : \wp(Y) \to \wp(X)$ is a complete Boolean homomorphism. This yields a contravariant functor $ {\sf Set} \to {\sf CABA}$. Going backwards, for a complete and atomic Boolean algebra $\dv{B}$, let $X_\dv{B}$ be the set of atoms of $\dv{B}$, and for a complete Boolean homomorphism $\sigma : \dv{B}_1 \to \dv{B}_2$, let $\sigma_+ : X_{\dv{B}_2} \to X_{\dv{B}_1}$ be given by $\sigma_+(x) = \bigwedge \{ b \in \dv{B}_1 \mid x \le \sigma(b) \}$. It is well known that $\sigma_+$ is a well-defined function, yielding a contravariant functor ${\sf CABA} \to {\sf Set}$. For each set $X$, we have a natural isomorphism $\eta_X : X \to X_{\wp(X)}$,
given by $\eta_X(x) = \{x\}$ for each $x \in X$; and for each $\dv{B} \in \sf CABA$, we have a natural isomorphism $\vartheta_\dv{B} : \dv{B} \to \wp(X_\dv{B})$, given by $\vartheta_\dv{B}(b) = \{ x \in X_\dv{B} \mid x \le b\}$.

\begin{theorem}
[Tarski duality] The functors described above give a dual equivalence between $\sf Set$ and $\sf CABA$.
\end{theorem}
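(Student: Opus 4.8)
The plan is to check that the two assignments described above are contravariant functors and that $\eta$ and $\vartheta$ are natural isomorphisms; a dual equivalence between $\sf Set$ and $\sf CABA$ then follows by definition. Functoriality of $X \mapsto \wp(X)$, $f \mapsto f^{-1}$ is routine: preimage commutes with arbitrary unions, intersections and complements, so $f^{-1}$ is a complete Boolean homomorphism, and clearly $(1_X)^{-1} = 1_{\wp(X)}$ and $(g \circ f)^{-1} = f^{-1} \circ g^{-1}$. For $\dv{B} \mapsto X_{\dv{B}}$, $\sigma \mapsto \sigma_+$, the identity is sent to the identity, and $(\tau \circ \sigma)_+ = \sigma_+ \circ \tau_+$ follows once one records the basic fact that for a complete Boolean homomorphism $\sigma : \dv{B}_1 \to \dv{B}_2$ and an atom $x$ of $\dv{B}_2$ we have $x \le \sigma(\sigma_+(x))$; indeed $\sigma(\sigma_+(x)) = \sigma(\bigwedge\{b \mid x \le \sigma(b)\}) = \bigwedge\{\sigma(b) \mid x \le \sigma(b)\} \ge x$.

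Next I would verify that each $\eta_X$ and each $\vartheta_{\dv{B}}$ is an isomorphism. Since the atoms of $\wp(X)$ are exactly the singletons, $x \mapsto \{x\}$ is a bijection $X \to X_{\wp(X)}$. For $\vartheta_{\dv{B}}$ the key point is that, in any complete Boolean algebra, an atom $x$ is completely join-prime: if $x \le \bigvee_i b_i$ but $x \not\le b_i$ for every $i$, then $x \wedge b_i = 0$ for all $i$ (as $x$ is an atom), hence $\bigvee_i b_i \le \neg x$ and so $x = x \wedge \bigvee_i b_i = 0$, a contradiction. Combining this with the observation that $x \le b \iff x \wedge b \ne 0 \iff x \not\le \neg b$ for an atom $x$, one sees that $\vartheta_{\dv{B}}$ preserves $0$, $1$, arbitrary joins and complements, so it is a complete Boolean homomorphism. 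It is onto because $\vartheta_{\dv{B}}(\bigvee S) = S$ for every $S \subseteq X_{\dv{B}}$, again by complete join-primeness; and it is one-to-one by atomicity, since $b \not\le c$ gives $b \wedge \neg c \ne 0$ and hence an atom in $\vartheta_{\dv{B}}(b) \setminus \vartheta_{\dv{B}}(c)$. A bijective complete Boolean homomorphism being a $\sf CABA$-isomorphism, $\vartheta_{\dv{B}}$ is an isomorphism.

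It then remains to check naturality. For $f : X \to Y$ one computes $(f^{-1})_+(\{x\}) = \bigcap\{S \subseteq Y \mid x \in f^{-1}(S)\} = \bigcap\{S \subseteq Y \mid f(x) \in S\} = \{f(x)\} = \eta_Y(f(x))$, so $\eta$ is natural. For a $\sf CABA$-morphism $\sigma : \dv{B}_1 \to \dv{B}_2$ the relevant square commutes once one shows that for an atom $x$ of $\dv{B}_2$ and $b \in \dv{B}_1$ one has $\sigma_+(x) \le b \iff x \le \sigma(b)$: the forward implication follows from $x \le \sigma(\sigma_+(x)) \le \sigma(b)$, and the converse is immediate from the description of $\sigma_+$ as a meet. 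Feeding this equivalence into $(\sigma_+)^{-1}(\vartheta_{\dv{B}_1}(b)) = \{x \mid \sigma_+(x) \le b\}$ rewrites it as $\{x \mid x \le \sigma(b)\} = \vartheta_{\dv{B}_2}(\sigma(b))$, so $\vartheta$ is natural as well.

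The main obstacle is the verification that $\vartheta_{\dv{B}}$ is an isomorphism, since this is precisely where both hypotheses on $\dv{B}$ are used: completeness to form the joins $\bigvee S$ and to make atoms completely join-prime, and atomicity to obtain injectivity. Everything else is bookkeeping, and the statement is classical, so in the write-up it may suffice to sketch the above and refer to a standard source.
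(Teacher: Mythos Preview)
Your argument is correct and carefully checks all the required pieces: functoriality of both assignments, that $\eta_X$ and $\vartheta_{\dv{B}}$ are isomorphisms (using completeness and atomicity exactly where needed), and naturality via the adjunction $\sigma_+(x) \le b \iff x \le \sigma(b)$.

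However, the paper gives no proof of this theorem at all: it is stated as a classical result and left without proof. So there is nothing to compare your argument against; your own closing remark---that one may sketch the verification and refer to a standard source---is precisely what the paper does (minus the sketch). If you want to include a proof in your write-up, what you have is fine; one small point to tidy is that your functoriality paragraph for $\sigma \mapsto \sigma_+$ defers the key adjunction $\sigma_+(x) \le b \iff x \le \sigma(b)$ to the naturality discussion, whereas it is also what drives $(\tau \circ \sigma)_+ = \sigma_+ \circ \tau_+$ (and implicitly the well-definedness of $\sigma_+$ as a map into atoms). Stating it once up front would streamline the exposition.
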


For each complete Boolean algebra $A$, the pair $\dv{A}=(A,\le)$ is a de Vries algebra of a special kind. Such de Vries algebras
are called \emph{extremally disconnected} in \cite{Bez10} since they correspond to extremally disconnected compact Hausdorff spaces.
Therefore, the pair $(\wp(X), \subseteq)$ is an extremally disconnected de Vries algebra which in addition is an atomic Boolean algebra.
Thus, each compactification $e:X\to Y$ gives rise to the de Vries algebra $(\RO(Y),\prec)$, the atomic extremally disconnected de Vries algebra
$(\wp(X),\subseteq)$, and the map $e^{-1}:\RO(Y)\to\wp(X)$.


\begin{notation}
Let $e : X \to Y$ be a compactification. In the remainder of this paper we write $\RO(Y)$ for the de Vries algebra $(\RO(Y), \prec)$, and $\wp(X)$ for the atomic extremally disconnected de Vries algebra $(\wp(X), \subseteq)$.
\end{notation}

\begin{theorem} \label{prop: extension of a compactification}
Let $e : X \to Y$ be a compactification. Then $e^{-1} : \RO(Y) \to \wp(X)$ is a \emph{1-1} de Vries morphism such that each element of $\wp(X)$ is a join of meets from $e^{-1}[\RO(Y)]$.
\end{theorem}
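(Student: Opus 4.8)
The plan is to establish the three required properties in turn: that $e^{-1}$ is a de Vries morphism, that it is 1-1, and that every element of $\wp(X)$ is a join of meets from the image. Throughout I would again identify $X$ with a dense subspace of $Y$, so that $e^{-1}(V) = V\cap X$. The injectivity is immediate from Lemma~\ref{lem:RO(X)}: that lemma already gives that $e^{-1}:\RO(Y)\to\RO(X)$ is a Boolean isomorphism, and composing with the inclusion $\RO(X)\hookrightarrow\wp(X)$ preserves injectivity, so $e^{-1}:\RO(Y)\to\wp(X)$ is 1-1.

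For the de Vries morphism conditions (M1)--(M4), I would check each against the axioms in Definition~\ref{def: DeV morphism}, bearing in mind that the proximity on $\wp(X)$ is just $\subseteq$. (M1) is clear since $\emptyset\cap X=\emptyset$. For (M2), given $V,W\in\RO(Y)$, I need $(V\wedge W)\cap X = (V\cap X)\cap(W\cap X)$; here $V\wedge W = \int_Y(V\cap W)$, and since $X$ is dense, $\int_Y(V\cap W)\cap X = (V\cap W)\cap X$ — this uses that an open set and its interior-of-itself business behaves well on dense subspaces, or more directly that $\cl_Y(V\cap W\setminus\int_Y(V\cap W))$ is nowhere dense, hence misses a dense subspace; I would spell this out carefully as it is the kind of point where density is essential. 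For (M3), I must show $V\prec W$ in $\RO(Y)$ implies $\neg_{\wp(X)}(e^{-1}(\neg V)) \subseteq e^{-1}(W)$, i.e. $X\setminus((Y\setminus V)\cap X) \subseteq W\cap X$, which simplifies to $(Y\setminus(Y\setminus V))\cap X \subseteq W\cap X$; since $V\prec W$ means $\cl_Y(V)\subseteq W$ and $V$ is regular open so $\int_Y(Y\setminus(Y\setminus V))=V$... here I would note that $\neg$ in $\wp(X)$ is genuine complement, not the regular-open complement, so the inclusion $V\cap X\subseteq W\cap X$ suffices and follows from $V\subseteq\cl_Y(V)\subseteq W$. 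For (M4), I need $e^{-1}(W) = \bigcup\{e^{-1}(V)\mid V\prec W\}$, i.e. $W\cap X = \bigcup\{V\cap X\mid \cl_Y(V)\subseteq W\}$; the inclusion $\supseteq$ is trivial, and $\subseteq$ follows since $Y$ is compact Hausdorff hence regular, so every point of the open set $W$ lies in some regular open $V$ with $\cl_Y(V)\subseteq W$.

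The last property — that each element of $\wp(X)$ is a join of meets from $e^{-1}[\RO(Y)]$ — I would prove by first showing every singleton $\{x\}$ with $x\in X$ is a meet (in $\wp(X)$, i.e. an intersection) of sets of the form $V\cap X$ with $V\in\RO(Y)$, and then noting that an arbitrary $S\subseteq X$ is the union $\bigcup_{x\in S}\{x\}$, hence a join of such meets. For the singleton claim: since $Y$ is compact Hausdorff and $\{x\}$ is closed, $\{x\} = \bigcap\{\cl_Y(V)\mid V\in\RO(Y),\ x\in V\}$ by regularity of $Y$; intersecting with $X$ and using that $\cl_Y(V)\cap X$ contains $V\cap X$ which also shrinks down to $\{x\}$, I would conclude $\{x\} = \bigcap\{V\cap X\mid V\in\RO(Y),\ x\in V\}$. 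I expect the main obstacle to be the bookkeeping around the two different negations/complements in (M3) and making the density arguments for (M2) fully rigorous rather than hand-wavy; the join-of-meets claim is conceptually the cleanest part once one recalls that regular open sets separate points from closed sets in a compact Hausdorff space.
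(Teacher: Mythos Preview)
Your overall plan matches the paper's: verify (M1)--(M4) directly, pull injectivity from Lemma~\ref{lem:RO(X)}, and obtain join-meet density by writing each singleton as an intersection of regular opens. Two of your verifications, however, need repair.

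For (M2) you have made life harder than necessary and introduced a false general claim. The intersection of two regular open sets is already open, so $\int_Y(V\cap W)=V\cap W$ trivially; no density argument is needed, and indeed the paper simply remarks that binary meet in $\RO(Y)$ is intersection. Your parenthetical that a nowhere dense closed set ``misses a dense subspace'' is wrong in general (e.g.\ $\{0\}$ is nowhere dense in $\mathbb{R}$ yet meets $\mathbb{Q}$), so you should not rely on it even in passing.

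The genuine gap is in (M3). The negation $\neg V$ is taken in $\RO(Y)$, so $\neg V=\int_Y(Y\setminus V)$, not $Y\setminus V$. Consequently
\[
\neg_{\wp(X)}\bigl(e^{-1}(\neg V)\bigr)=X\setminus\bigl(\int_Y(Y\setminus V)\cap X\bigr)=X\setminus\bigl((Y\setminus\cl_Y(V))\cap X\bigr)=\cl_Y(V)\cap X,
\]
which is in general strictly larger than the set $V\cap X$ you computed. Thus the inclusion $V\cap X\subseteq W\cap X$ does not suffice; you need $\cl_Y(V)\cap X\subseteq W\cap X$, and this is exactly where the hypothesis $\cl_Y(V)\subseteq W$ is used in full strength. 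Once you correct the negation, the argument goes through immediately and coincides with the paper's.
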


\begin{proof}
We first show that $e^{-1}$ is a de Vries morphism. Obviously $e^{-1}(\varnothing) = \varnothing$, so (M1) holds. Since binary meet in $\RO(Y)$ is intersection, it is clear that (M2) holds. Suppose $U,V \in \RO(Y)$ with $U \prec V$. Then ${\cl_Y}(U)\subseteq V$, so $Y\setminus V \subseteq Y\setminus {\cl_Y}(U)={\int_Y}(Y\setminus U)$. Therefore, $e^{-1}(Y\setminus V)\subseteq e^{-1}{\int_Y}(Y\setminus U)$, and so $X\setminus e^{-1}{\int_Y}(Y\setminus U)\subseteq X\setminus e^{-1}(Y\setminus V)=e^{-1}(V)$. Thus, (M3) holds. If $U \in \RO(Y)$, then since $Y$ is compact Hausdorff, $U=\bigcup\{V\in\RO(Y) \mid {\cl_Y}(V)\subseteq U\}$, so $e^{-1}(U)=\bigcup\{e^{-1}(V) \mid V\prec U\}$. Consequently, (M4) holds, and hence $e^{-1}$ is a de Vries morphism. By Lemma~\ref{lem:RO(X)}, $e^{-1}$ is 1-1. Finally, the atoms of $\wp(X)$ are the singletons. Because $X$ is regular, if $x \in X$, then $\{x\}$ is the intersection of all regular open subsets of $X$ containing $x$. By Lemma~\ref{lem:RO(X)}, they are of the form $e^{-1}(U)$ with $U\in\RO(Y)$. Since each element in $\wp(X)$ is a union of singletons, $\wp(X)$ is a join of meets from $e^{-1}[\RO(Y)]$.
\end{proof}

Theorem~\ref{prop: extension of a compactification} motivates Definition~\ref{def: DeVe} below. In order to formulate it succinctly, we require the following definition.

\begin{definition}
Let $\alpha : \dv{A} \to \dv{B}$ be a de Vries morphism.
\begin{enumerate}
\item We say $\alpha[\dv{A}]$ is join-meet dense in $\dv{B}$ if each element of $\dv{B}$ is a join of meets from $\alpha[\dv{A}]$.
\item We say $\alpha[\dv{A}]$ is meet-join dense in $\dv{B}$ if each element of $\dv{B}$ is a meet of joins from $\alpha[\dv{A}]$.
\end{enumerate}
\end{definition}

\begin{remark} \label{rem: jm density}
Let $\alpha : \dv{A} \to \dv{B}$ be a de Vries morphism with $\dv{B}$ an atomic extremally disconnected de Vries algebra.
\begin{enumerate}
\item $\alpha[\dv{A}]$ is join-meet dense in $\dv{B}$ iff each atom of $\dv{B}$ is a meet from $\alpha[\dv{B}]$.
\item $\alpha[\dv{A}]$ is meet-join dense in $\dv{B}$ iff each coatom of $\dv{B}$ is a join from $\alpha[\dv{B}]$.
\item $\alpha[\dv{A}]$ is join-meet dense in $\dv{B}$ iff $\alpha[\dv{A}]$ is meet-join dense in $\dv{B}$. To see the left to right implication, suppose that $\alpha[\dv{A}]$ is join-meet dense in $\dv{B}$. Let $x \in \dv{B}$ be a coatom. Then $\lnot x$ is an atom, so $\lnot x = \bigwedge \{ \alpha(c) \mid c \in \dv{A}, \lnot x \le \alpha(c)\}$ by (1). This yields $x = \bigvee \{ \lnot\alpha(c) \mid c \in \dv{A}, \lnot x \le \alpha(c)\}$. Let $c \in \dv{A}$ with $\lnot x \le \alpha(c)$. Since $\alpha$ is a de Vries morphism, $\alpha(c) = \bigvee \{ \alpha(a) \mid a \prec c\}$. Because $\lnot x$ is an atom, there is $a \prec c$ with $\lnot x \le \alpha(a)$. We have $\alpha(a) \le \lnot\alpha(\lnot a) \le \alpha(c)$, so $\lnot\alpha(c) \le \alpha(\lnot a)$. Also, since $\lnot x \le \alpha(a)$, we see that $\alpha(\lnot a) \le \lnot\alpha(a) \le x$. Thus, $x = \bigvee \{ \alpha(\lnot a) \mid \lnot x \le \alpha(a) \}$, and so $x$ is a join from $\alpha[\dv{A}]$. Applying (2) then gives that $\alpha[\dv{A}]$ is meet-join dense in $\dv{B}$. The right to left implication follows from a similar argument.
\end{enumerate}
\end{remark}

We are ready for the main definition of this article.

\begin{definition}\label{def: DeVe}
Let $\dv{A}$ be a de Vries algebra and $\dv{B}=(B,\le)$ an atomic extremally disconnected de Vries algebra. A \emph{de Vries extension} is a 1-1 de Vries morphism  $\alpha: \dv{A}\to \dv{B}$ such that $\alpha[\dv{A}]$ is join-meet dense in $\dv{B}$.
\end{definition}

\begin{proposition} \label{prop: deve a category}
There is a category $\deve$ whose objects are de Vries extensions and whose morphisms are pairs $(\rho, \sigma)$, where $\rho$ is a de Vries morphism, $\sigma$ is a complete Boolean homomorphism, and $\sigma\circ\alpha = \alpha' \star \rho$.
\[
\xymatrix@C5pc{
\dv{A} \ar[r]^{\alpha} \ar[d]_{\rho} & \dv{B} \ar[d]^{\sigma} \\
\dv{A'} \ar[r]_{\alpha'} & \dv{B'}
}
\]
The composition of two morphisms $(\rho_1, \sigma_1)$ and $(\rho_2, \sigma_2)$ is defined to be $(\rho_2\star \rho_1, \sigma_2\circ \sigma_1)$.
\[
\xymatrix@C5pc{
\dv{A_1} \ar@/_1.5pc/[dd]_{\rho_2\star\rho_1}\ar[r]^{\alpha_1} \ar[d]^{\rho_1} & \dv{B}_1 \ar[d]_{\sigma_1} \ar@/^1.5pc/[dd]^{\sigma_2\circ\sigma_1} \\
\dv{A}_2 \ar[r]^{\alpha_2} \ar[d]^{\rho_2} & \dv{B}_2 \ar[d]_{\sigma_2} \\
\dv{A}_3 \ar[r]_{\alpha_3} & \dv{B}_3
}
\]
\end{proposition}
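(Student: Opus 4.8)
The plan is to verify the three things required for $\deve$ to be a category: that the prescribed composition of two composable morphisms is again a morphism, that this composition is associative, and that every object carries an identity morphism. The facts we lean on are all inherited from the earlier sections: $\dev$ is a category (so de Vries morphisms are closed under $\star$ and $\star$ is associative), complete Boolean homomorphisms are closed under function composition and are themselves de Vries morphisms when the proximity is the underlying order, and $\sigma\star\tau = \sigma\circ\tau$ whenever $\sigma$ is a complete Boolean homomorphism (Remark~\ref{rem: composition}(2)). We also note that for an object $\alpha:\dv{A}\to\dv{B}$ the defining equation $\sigma\circ\alpha = \alpha'\star\rho$ of a morphism is an identity between de Vries morphisms $\dv{A}\to\dv{B'}$, since $\sigma\circ\alpha$ is one.

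First I would check that composition is well defined. Given morphisms $(\rho_1,\sigma_1):\alpha_1\to\alpha_2$ and $(\rho_2,\sigma_2):\alpha_2\to\alpha_3$, so that $\sigma_1\circ\alpha_1 = \alpha_2\star\rho_1$ and $\sigma_2\circ\alpha_2 = \alpha_3\star\rho_2$, the map $\rho_2\star\rho_1$ is a de Vries morphism and $\sigma_2\circ\sigma_1$ a complete Boolean homomorphism, so the only point to verify is that the outer rectangle commutes. Replacing $\circ$ by $\star$ whenever the left factor is a complete Boolean homomorphism and using associativity of $\star$, one computes
\begin{align*}
(\sigma_2\circ\sigma_1)\circ\alpha_1 &= \sigma_2\star(\sigma_1\circ\alpha_1) = \sigma_2\star(\alpha_2\star\rho_1) = (\sigma_2\star\alpha_2)\star\rho_1 \\
&= (\sigma_2\circ\alpha_2)\star\rho_1 = (\alpha_3\star\rho_2)\star\rho_1 = \alpha_3\star(\rho_2\star\rho_1),
\end{align*}
so $(\rho_2\star\rho_1,\sigma_2\circ\sigma_1)$ is a morphism from $\alpha_1$ to $\alpha_3$.

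Associativity of composition in $\deve$ is then immediate: it is computed coordinatewise, and $\star$ on first coordinates and $\circ$ on second coordinates are each associative. For identities, given $\alpha:\dv{A}\to\dv{B}$ I would take $(1_{\dv{A}},1_{\dv{B}})$: the map $1_{\dv{A}}$ satisfies (M1)--(M4) trivially and is the $\star$-unit (both $\rho\star 1_{\dv{A}}=\rho$ and $1_{\dv{B}}\star\rho=\rho$ follow from (M4)), $1_{\dv{B}}$ is a complete Boolean homomorphism, and $1_{\dv{B}}\circ\alpha = \alpha = \alpha\star 1_{\dv{A}}$, so $(1_{\dv{A}},1_{\dv{B}})$ is a morphism; being coordinatewise the identity, it is a two-sided unit. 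The only step with real content is the commuting-rectangle computation above; the rest is bookkeeping transported from $\dev$ and $\sf CABA$.
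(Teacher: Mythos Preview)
Your proof is correct and follows the same approach as the paper; in fact it is considerably more detailed. The paper's own proof is a one-liner that simply appeals to the associativity of $\star$ and of $\circ$ and declares the rest ``straightforward,'' whereas you explicitly carry out the rectangle-commutes computation that justifies why composition is well defined, and you check identities---both of which the paper leaves implicit.
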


\begin{proof}
Since both $\star$ and $\circ$ are associative operations, it follows that composition in $\deve$ is a well-defined associative operation, and so it is straightforward to see that $\deve$ is a category.
\end{proof}

\begin{remark} \label{rem: Fedorchuk}
Let $(\rho, \sigma)$ be a morphism in $\deve$ between de Vries extensions $\alpha : \dv{A} \to \dv{B}$ and $\alpha' : \dv{A}' \to \dv{B}'$. 
By Remark~\ref{rem: composition}(2), $\sigma \star \alpha = \sigma \circ \alpha$. Thus, the equation $\sigma \circ \alpha = \alpha' \star \rho$ 
says that the first diagram in the statement of Proposition~\ref{prop: deve a category} is a commutative diagram in $\dev$. The complete Boolean 
homomorphism $\sigma$ is an instance of a special type of de Vries morphism studied by Fedorchuk (see \cite{Fed74} or \cite[Sec.~3]{Bez10}).
\end{remark}

\section{Dual equivalence of $\C$ and $\deve$} \label{sec: duality}

This section is dedicated to proving that $\C$ and $\deve$ are dually equivalent.  We begin by defining a functor ${\sf E} : \C \to \deve$ as follows. If $e : X \to Y$ is an object in $\C$, we define its image under $\sf E$ as the de Vries extension $e^{-1} : \RO(Y) \to \wp(X)$. For a morphism $(f,g)$ in $\C$
\[
\xymatrix@C5pc{
X \ar[r]^{e} \ar[d]_{f} & Y \ar[d]^{g} \\
X' \ar[r]_{e'} & Y'
}
\]
we define ${\sf E}(f,g)$ as the pair $(g^*,f^{-1})$.
\[
\xymatrix@C4pc{
\RO(Y') \ar[d]_{g^*} \ar[r]^{(e')^{-1}} & \wp(X') \ar[d]^{f^{-1}}\\
\RO(Y) \ar[r]_{e^{-1}} & \wp(X)
}
\]

\begin{proposition} \label{prop: functor E}
${\sf E} : \C \to \deve$ is a contravariant functor.
\end{proposition}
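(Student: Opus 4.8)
The plan is to check the four things needed for ${\sf E}$ to be a contravariant functor: that it is well defined on objects, that it sends $\C$-morphisms to $\deve$-morphisms, that it preserves identities, and that it reverses composition. The object part is immediate, since Theorem~\ref{prop: extension of a compactification} states exactly that $e^{-1} : \RO(Y) \to \wp(X)$ is a de Vries extension. Thus the bulk of the work is to verify that, for a $\C$-morphism $(f,g) : (e : X \to Y) \to (e' : X' \to Y')$, the pair $(g^*, f^{-1})$ is a $\deve$-morphism from $(e')^{-1}$ to $e^{-1}$. That $g^* : \RO(Y') \to \RO(Y)$ is a de Vries morphism is part of de Vries duality (Section~\ref{sec: de Vries duality}), and that $f^{-1} : \wp(X') \to \wp(X)$ is a complete Boolean homomorphism is part of Tarski duality (Section~\ref{sec: de Vries}). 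Hence the only substantive point---and the step I expect to be the main obstacle---is the commutativity condition $f^{-1} \circ (e')^{-1} = e^{-1} \star g^*$.

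To establish this, I would identify $X$ with a dense subspace of $Y$ and $X'$ with a dense subspace of $Y'$, so that $e^{-1}$ and $(e')^{-1}$ become restriction-to-subspace maps and commutativity of the given square reads $g(x) = f(x)$ for all $x \in X$. Fix $U \in \RO(Y')$. Unwinding the left-hand side gives $(f^{-1} \circ (e')^{-1})(U) = \{x \in X \mid g(x) \in U\} = g^{-1}(U) \cap X$. For the right-hand side, since joins in $\wp(X)$ are unions,
\[
(e^{-1} \star g^*)(U) = \bigcup \{ g^*(V) \cap X \mid V \in \RO(Y'),\ {\cl}_{Y'}(V) \subseteq U \}.
\]
To see this union equals $g^{-1}(U) \cap X$, I would check both inclusions. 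If ${\cl}_{Y'}(V) \subseteq U$, then continuity of $g$ gives ${\cl}_Y(g^{-1}(V)) \subseteq g^{-1}({\cl}_{Y'}(V)) \subseteq g^{-1}(U)$, so $g^*(V) = {\int}_Y({\cl}_Y(g^{-1}(V))) \subseteq g^{-1}(U)$ because $g^{-1}(U)$ is open; taking unions yields $\subseteq$. Conversely, given $x \in g^{-1}(U) \cap X$, regularity of $Y'$ lets one pick $V \in \RO(Y')$ with $g(x) \in V$ and ${\cl}_{Y'}(V) \subseteq U$; then $x \in g^{-1}(V) \subseteq {\int}_Y({\cl}_Y(g^{-1}(V))) = g^*(V)$ and $x \in X$, giving $\supseteq$. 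This proves the equality, so $({\sf E})(f,g)$ is indeed a $\deve$-morphism.

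For identities, $(1_Y)^* = 1_{\RO(Y)}$ since ${\int}({\cl}(U)) = U$ for $U$ regular open, $(1_X)^{-1} = 1_{\wp(X)}$, and the identity de Vries morphism is a $\star$-identity by axiom (M4); hence ${\sf E}(1_X, 1_Y)$ is the identity morphism of $e^{-1}$ in $\deve$. Finally, for composition, contravariance requires that for $(f_1,g_1) : e_1 \to e_2$ and $(f_2,g_2) : e_2 \to e_3$ one has ${\sf E}(f_2\circ f_1,\, g_2\circ g_1) = {\sf E}(f_1,g_1)\circ {\sf E}(f_2,g_2)$ in $\deve$; by the definition of composition in $\deve$ this amounts to the two identities $(g_2\circ g_1)^* = g_1^* \star g_2^*$ and $(f_2\circ f_1)^{-1} = f_1^{-1}\circ f_2^{-1}$, which are the functoriality of $(-)^*$ (de Vries duality) and of $(-)^{-1}$ (Tarski duality), respectively. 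Everything beyond the commutativity computation of the second paragraph is thus routine bookkeeping with the two dualities already recalled.
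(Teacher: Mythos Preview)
Your proposal is correct and follows essentially the same approach as the paper: both reduce the object part to Theorem~\ref{prop: extension of a compactification}, invoke de Vries and Tarski duality for the properties of $g^*$ and $f^{-1}$, and concentrate the real work on showing that $(f^{-1}\circ (e')^{-1})(U)$ and $(e^{-1}\star g^*)(U)$ both coincide with $e^{-1}g^{-1}(U)$ (your $g^{-1}(U)\cap X$ under the subspace identification) via the same two-inclusion argument, with functoriality of $(-)^*$ and $(-)^{-1}$ handling composition.
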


\begin{proof}
By Theorem~\ref{prop: extension of a compactification}, for each object $e : X \to Y$ in $\C$, we have ${\sf E}(e)$ is an object in $\deve$. Let $(f,g)$ be a morphism in $\C$. By de Vries duality, $g^*$ is a de Vries morphism, and it is clear that $f^{-1}$ is a complete Boolean homomorphism. To see that the diagram
\[
\xymatrix@C4pc{
\RO(Y') \ar[d]_{g^*} \ar[r]^{(e')^{-1}} & \wp(X') \ar[d]^{f^{-1}}\\
\RO(Y) \ar[r]_{e^{-1}} & \wp(X)
}
\]
commutes, let $U\in\RO(Y')$. Since $f^{-1}$ is a complete Boolean homomorphism, $f^{-1} \star (e')^{-1} = f^{-1} \circ (e')^{-1}$ (see Remark~\ref{rem: Fedorchuk}). As $e'\circ f=g\circ e$, we have
\[
(f^{-1}\star(e')^{-1})(U)=f^{-1}((e')^{-1}(U)) = (e'\circ f)^{-1}(U)=(g\circ e)^{-1}(U)=e^{-1}g^{-1}(U).
\]
On the other hand, $(e^{-1}\star g^*)(U)=\bigcup\{e^{-1}g^*(V) \mid V\prec U\}$. From $V \prec U$ it follows that $\cl(V) \subseteq U$, so
$\int(e^{-1}g^{-1}(\cl(V))) \subseteq e^{-1}g^{-1}(U)$. Because $e$ and $g$ are continuous, $e^{-1}g^*(V) = e^{-1}(\int(\cl(g^{-1}(V))))$
is contained in $\int (e^{-1}g^{-1}(\cl(V)))$. Thus, $e^{-1}g^*(V)\subseteq e^{-1}g^{-1}(U)$, so $(e^{-1}\star g^*)(U)\subseteq e^{-1}g^{-1}(U)$.
Conversely, $x\in e^{-1}g^{-1}(U)$ implies $ge(x)\in U$, so there is $V\prec U$ with $ge(x)\in V$. Therefore,
$x \in e^{-1}g^{-1}(V)\subseteq e^{-1}(\int(\cl(g^{-1}(V))))=e^{-1}g^*(V)$, and so $e^{-1}g^{-1}(U)\subseteq (e^{-1}\star g^*)(U)$.
This shows that $e^{-1}g^{-1}(U) = (e^{-1}\star g^*)(U)$, and so $(f^{-1}\star(e')^{-1})(U) = (e^{-1}\star g^*)(U)$. Thus, the diagram commutes,
and hence ${\sf E}(f,g)$ is a morphism in $\deve$.

It is elementary to see that $\sf E$ sends identity morphisms to identity morphisms. Given the composable morphisms in $\C$ in the following diagram
\[
\xymatrix@C5pc{
X_1 \ar[r]^{e_1} \ar@/_1.5pc/[dd]_{f} \ar[d]^{f_1} & Y_1 \ar[d]_{g_1} \ar@/^1.5pc/[dd]^{g}\\
X_2 \ar[r]^{e_2} \ar[d]^{f_2} & Y_2 \ar[d]_{g_2} \\
X_3 \ar[r]_{e_3} & Y_3
}
\]
where $f = f_2 \circ f_1$ and $g = g_2 \circ g_1$, we obtain the diagram
\[
\xymatrix@C4pc{
\RO(Y_3) \ar@/_2pc/[dd]_{g^*} \ar[d]^{g_2^*} \ar[r]^{{e_3}^{-1}} & \wp(X_3) \ar[d]_{f_2^{-1}} \ar@/^2pc/[dd]^{f^{-1}} \\
\RO(Y_2) \ar[d]^{g_1^*} \ar[r]^{{e_2}^{-1}} & \wp(X_2) \ar[d]_{f_1^{-1}}\\
\RO(Y_1) \ar[r]_{e_1^{-1}} & \wp(X_1)
}
\]
Since $f=f_2 \circ f_1$, we have $f^{-1} = f_1^{-1} \circ f_2^{-1}$.
Also, by de Vries duality, $g^* = g_1^* \star g_2^*$. Therefore, the diagram commutes, and so
${\sf E}((f_2,g_2) \circ (f_1,g_1)) = {\sf E}(f_1,g_1) \circ {\sf E}(f_2,g_2)$. Thus, $\sf E$ is a contravariant functor.
\end{proof}

We next wish to define a functor ${\sf C}:{\deve}\to\C$, which requires some preparation. If $\dv{B}$ is an extremally disconnected de Vries algebra,
then the proximity is $\le$, so ends of $\dv{B}$ are simply ultrafilters of $\dv{B}$, and hence $Y_{\dv{B}}$ is the Stone space of $\dv{B}$.

\begin{definition}
For an atomic extremally disconnected de Vries algebra $\dv{B}$, let $X_{\dv{B}}$ be the set of atoms of $\dv{B}$. We identify an atom $b$ with the principal ultrafilter ${\uparrow}b$ of $\dv{B}$, and view $X_{\dv{B}}$ as a subset of $Y_{\dv{B}}$.
\end{definition}

\begin{lemma}\label{lem: atoms}
Suppose $\alpha : \dv{A} \to \dv{B}$ is a de Vries morphism with $\dv{B}$ an atomic extremally disconnected de Vries algebra. For each $a\in \dv{A}$ and each atom $b \in \dv{B}$, the following are equivalent.
\begin{enumerate}
\item $b \le \alpha(a)$.
\item $a \in \alpha_*({\uparrow}b)$.
\item ${\uparrow}b \in \alpha_*^{-1} \zeta(a)$.
\end{enumerate}
\end{lemma}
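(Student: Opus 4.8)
The plan is to prove the chain (1)$\Leftrightarrow$(2)$\Leftrightarrow$(3) by unwinding the definitions of $\alpha_*$, $\zeta$, and the identification of atoms with principal ultrafilters. The equivalence (2)$\Leftrightarrow$(3) should be immediate: by definition $\zeta(a) = \zeta_{\dv{A}}(a) = \{x \in Y_{\dv{A}} \mid a \in x\}$, so ${\uparrow}b \in \alpha_*^{-1}\zeta(a)$ means $\alpha_*({\uparrow}b) \in \zeta(a)$, which by definition of $\zeta$ means $a \in \alpha_*({\uparrow}b)$. This is just the set-theoretic unwinding of a preimage, so no work is needed there.

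The substantive content is (1)$\Leftrightarrow$(2). Recall that for a de Vries morphism $\rho : \dv{A} \to \dv{A}'$ the dual map sends an end $y$ of $\dv{A}'$ to $\rho_*(y) = {\thu}\rho^{-1}(y)$. Here $\dv{B}$ is extremally disconnected, so its ends are ultrafilters and the relevant end is the principal ultrafilter ${\uparrow}b$; thus $\alpha_*({\uparrow}b) = {\thu}\alpha^{-1}({\uparrow}b) = {\thu}\{a \in \dv{A} \mid b \le \alpha(a)\}$. So I must show: $b \le \alpha(a)$ iff $a \in {\thu}\{c \in \dv{A} \mid b \le \alpha(c)\}$, i.e.\ iff there is $c$ with $c \prec a$ and $b \le \alpha(c)$.

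For the direction ($\Leftarrow$): if $c \prec a$ and $b \le \alpha(c)$, then by Remark~\ref{rem: morphism properties}(4) $\alpha(c) \prec \alpha(a)$, hence $\alpha(c) \le \alpha(a)$ by (DV2), so $b \le \alpha(c) \le \alpha(a)$. For ($\Rightarrow$): suppose $b \le \alpha(a)$. Using (M4), $\alpha(a) = \bigvee\{\alpha(c) \mid c \prec a\}$, so $b \le \bigvee\{\alpha(c) \mid c \prec a\}$. Since $b$ is an atom of the atomic Boolean algebra $\dv{B}$, and an atom below a join must lie below one of the joinands — this is the point where atomicity is used and where I should be slightly careful: in a complete atomic Boolean algebra, if an atom $b$ satisfies $b \le \bigvee_i x_i$, then $b \le x_i$ for some $i$ (otherwise $b \wedge x_i = 0$ for all $i$, whence $b \wedge \bigvee_i x_i = \bigvee_i (b \wedge x_i) = 0$ using infinite distributivity in a cBa, contradicting $b \le \bigvee_i x_i$ and $b \neq 0$). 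Hence there is $c \prec a$ with $b \le \alpha(c)$, i.e.\ $a \in \alpha_*({\uparrow}b)$.

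The only mild obstacle is making sure the identification ``$\alpha_*({\uparrow}b) = {\thu}\alpha^{-1}({\uparrow}b)$'' is legitimate, i.e.\ that ${\uparrow}b$ is genuinely an end of $\dv{B}$ so that $\alpha_*$ applies to it; but this is exactly the content of the sentence preceding the lemma (for extremally disconnected $\dv{B}$ the proximity is $\le$, rounds filters are all filters, and ends are ultrafilters, so principal ultrafilters on atoms are ends). Everything else is a routine application of (M4), (DV2), Remark~\ref{rem: morphism properties}(4), and the atom-below-a-join fact in complete Boolean algebras.
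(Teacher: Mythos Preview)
Your proof is correct and follows essentially the same approach as the paper's. The only cosmetic difference is that the paper proves the cycle (1)$\Rightarrow$(2)$\Rightarrow$(3)$\Rightarrow$(1) rather than two biconditionals, and it gets $\alpha(x)\le\alpha(a)$ directly from order-preservation rather than via Remark~\ref{rem: morphism properties}(4) and (DV2); your added justification of the atom-below-a-join fact via infinite distributivity is a detail the paper leaves implicit.
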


\begin{proof}
(1)$\Rightarrow$(2). Suppose that $b \le \alpha(a)$. Since $\alpha$ is a de Vries morphism, $\alpha(a)=\bigvee\{\alpha(x)\mid x\prec a\}$, so
$b\le \bigvee\{\alpha(x)\mid x\prec a\}$. As $b$ is an atom,  there is $x$ with $x\prec a$ and $b\le\alpha(x)$. Thus, $a \in \alpha_*({\uparrow}b)$.

(2)$\Rightarrow$(3). If $a  \in \alpha_*({\uparrow}b)$, then $\alpha_*({\uparrow}b) \in \zeta(a)$, so ${\uparrow}b \in \alpha_*^{-1}\zeta(a)$.

(3)$\Rightarrow$(1). If ${\uparrow}b \in \alpha_*^{-1}\zeta (a)$, then $a \in \alpha_*({\uparrow}b) = \thu \alpha^{-1}({\uparrow}b)$, so there is $x$ with $x \prec a$ and $b \le \alpha(x)$. Therefore, $b \le \alpha(x) \le \alpha(a)$.
\end{proof}

\begin{lemma} \label{lem:char of jm density}
Let $\dv{A}$ and $\dv{B}$ be de Vries algebras, with $\dv{B}$ atomic and extremally disconnected, and let $\alpha : \dv{A} \to \dv{B}$ be a \emph{1-1} de Vries morphism. Then $\alpha$ is a de Vries extension iff the restriction of $\alpha_* : Y_{\dv{B}} \to Y_\dv{A}$ to $X_{\dv{B}}$ is \emph{1-1}.
\end{lemma}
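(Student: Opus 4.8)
The plan is to translate both sides of the equivalence into a single condition on the ends of $\dv{A}$ attached to the atoms of $\dv{B}$. For an atom $b$ of $\dv{B}$, set $F_b := \{a \in \dv{A} \mid b \le \alpha(a)\}$. By Lemma~\ref{lem: atoms} (equivalence of (1) and (2)), $F_b = \alpha_*({\uparrow}b)$, so $F_b$ is an end of $\dv{A}$, hence a maximal proper round filter (note $1 \in F_b$, so $F_b \ne \varnothing$). My first step would be to record that $\alpha$ is a de Vries extension iff $b = \bigwedge\{\alpha(a) \mid a \in F_b\}$ for every atom $b$ of $\dv{B}$: by Remark~\ref{rem: jm density}(1), $\alpha$ is a de Vries extension iff every atom of $\dv{B}$ is a meet of elements of $\alpha[\dv{A}]$; and if $b = \bigwedge\{\alpha(a) \mid a \in T\}$ for some $T \subseteq \dv{A}$, then $b \le \alpha(a)$ for each $a \in T$, so $T \subseteq F_b$, forcing $\bigwedge\{\alpha(a) \mid a \in F_b\} \le b \le \bigwedge\{\alpha(a) \mid a \in F_b\}$, and the desired representation holds.

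Next I would use atomicity of $\dv{B}$ to rewrite the condition $b = \bigwedge\{\alpha(a) \mid a \in F_b\}$. Since $b$ always lies below this meet, equality fails precisely when some atom $b' \ne b$ satisfies $b' \le \bigwedge\{\alpha(a) \mid a \in F_b\}$, that is, $b' \le \alpha(a)$ for every $a \in F_b$, that is, $F_b \subseteq F_{b'}$. Therefore $\alpha$ is a de Vries extension iff $F_b \not\subseteq F_{b'}$ for all distinct atoms $b, b'$ of $\dv{B}$.

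To finish, I would invoke that each $F_b$ is a maximal proper round filter while $F_{b'}$ is a proper round filter, so $F_b \subseteq F_{b'}$ forces $F_b = F_{b'}$; hence the displayed condition is equivalent to $\alpha_*({\uparrow}b) \ne \alpha_*({\uparrow}b')$ for all distinct atoms, which — under the identification of $X_{\dv{B}}$ with the set of principal ultrafilters ${\uparrow}b$ — says exactly that the restriction of $\alpha_*$ to $X_{\dv{B}}$ is \emph{1-1}. The one point needing a little care is that $\alpha_*({\uparrow}b)$ is genuinely a point of $Y_\dv{A}$ and an end: this rests on $\dv{B}$ being extremally disconnected, so that its ends are just its ultrafilters and the principal ultrafilter ${\uparrow}b$ at an atom $b$ qualifies as a point of $Y_{\dv{B}}$; everything else is a routine chain of equivalences built on Lemma~\ref{lem: atoms}.
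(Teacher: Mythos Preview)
Your proof is correct and follows essentially the same route as the paper's: both arguments hinge on Remark~\ref{rem: jm density}(1) to reduce join-meet density to the condition $b = \bigwedge\{\alpha(a)\mid b\le\alpha(a)\}$ for each atom $b$, use Lemma~\ref{lem: atoms} to identify $\{a\mid b\le\alpha(a)\}$ with the end $\alpha_*({\uparrow}b)$, invoke atomicity of $\dv{B}$ to detect failure via a second atom, and use maximality of ends to upgrade inclusion to equality. The only difference is organizational: you package the argument as a single chain of equivalences, while the paper proves the two implications separately.
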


\begin{proof}
Suppose that $\alpha$ is a de Vries extension. Let $b\ne c$ be atoms of $\dv{B}$. By Remark~\ref{rem: jm density}(1), each of $b$ and $c$ are meets from $\alpha[\dv{A}]$. Therefore, there is $a \in \dv{A}$ with $b \le \alpha(a)$ and $c \not\le \alpha(a)$. By Lemma~\ref{lem: atoms}, $a \in \alpha_*({\uparrow}b)$ and $a \notin \alpha_*({\uparrow}c)$. Thus, $\alpha_*$ is 1-1 on $X_{\dv{B}}$.

Conversely, suppose that $\alpha_*$ is 1-1 on $X_\dv{B}$. Let $b$ be an atom of $\dv{B}$ and let $d = \bigwedge \{ \alpha(a) \mid b \le \alpha(a)\}$. Then $b \le d$. If $b < d$, there is an atom $c \ne b$ with $c \le d$. Therefore, $c \le \alpha(a)$ for each $a$ with $b \le \alpha(a)$, which implies that $\alpha_*({\uparrow}b) \subseteq \alpha_*({\uparrow}c)$ by Lemma~\ref{lem: atoms}. Since $\alpha_*({\uparrow}b)$ and $\alpha_*({\uparrow}c)$ are ends, we get $\alpha_*({\uparrow}b) = \alpha_*({\uparrow}c)$, and so $b=c$ since $\alpha_*$ is 1-1. This is a contradiction. Thus, $d=b$, and so $b$ is a meet from $\alpha[\dv{A}]$. Thus, $\alpha[\dv{A}]$ is join-meet dense in $\dv{B}$ by Remark~\ref{rem: jm density}(1), and hence $\alpha$ is a de Vries extension.
\end{proof}

\begin{definition} \label{def:tau_alpha}
For a de Vries extension $\alpha : \dv{A} \to \dv{B}$, define a topology $\tau_\alpha$ on $X_{\dv{B}}$ as the least topology making $\alpha_* : X_{\dv{B}} \to Y_\dv{A}$ continuous.
\end{definition}

\begin{remark}
Following usual terminology, we will refer to the topological space $(X_\dv{B}, \tau_\alpha)$ as $X_\dv{B}$ when there is no danger of confusion about which topology we are using.
\end{remark}

\begin{theorem} \label{lem:compactification}
If $\alpha : \dv{A} \to \dv{B}$ is a de Vries extension, then $X_\dv{B}$ is completely regular and $\alpha_* : X_{\dv{B}} \to Y_\dv{A}$ is a compactification.
\end{theorem}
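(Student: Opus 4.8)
The plan is to verify three things: that $(X_\dv{B},\tau_\alpha)$ is completely regular, that $\alpha_*$ restricted to $X_\dv{B}$ is a topological embedding, and that the image $\alpha_*[X_\dv{B}]$ is dense in $Y_\dv{A}$. Since $Y_\dv{A}$ is compact Hausdorff (by de Vries duality), the first two facts already force complete regularity, but it is cleaner to note complete regularity directly: $\tau_\alpha$ is by definition the initial topology induced by a single map into a compact Hausdorff (hence completely regular) space, and initial topologies with respect to a family of maps into completely regular spaces are completely regular, so $X_\dv{B}$ is completely regular. That $\alpha_*$ is continuous is immediate from Definition~\ref{def:tau_alpha}. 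That it is 1-1 on $X_\dv{B}$ is exactly Lemma~\ref{lem:char of jm density}, using that $\alpha$ is a de Vries extension.

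The substantive steps are then: (a) show $\alpha_*|_{X_\dv{B}}$ is an embedding, i.e.\ a homeomorphism onto its image, and (b) show the image is dense. For (a), a subbasis for $\tau_\alpha$ is $\{\alpha_*^{-1}\zeta_\dv{A}(a) \mid a \in \dv{A}\}$ since $\{\zeta_\dv{A}(a)\}$ is a basis for $Y_\dv{A}$; so by Lemma~\ref{lem: atoms}, $\alpha_*^{-1}\zeta_\dv{A}(a) = \{\,{\uparrow}b \in X_\dv{B} \mid b \le \alpha(a)\,\}$. To see $\alpha_*|_{X_\dv{B}}$ is open onto its image, it suffices to check that each such subbasic open set is of the form $\alpha_*[X_\dv{B}] \cap W$ for $W$ open in $Y_\dv{A}$, which is automatic by definition of the initial topology; combined with injectivity and continuity, this gives that $\alpha_*$ is a homeomorphism onto $\alpha_*[X_\dv{B}]$. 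For (b), density amounts to showing that every nonempty basic open $\zeta_\dv{A}(a)$ of $Y_\dv{A}$ meets $\alpha_*[X_\dv{B}]$; equivalently, if $a \ne 0$ then there is an atom $b$ of $\dv{B}$ with $b \le \alpha(a)$ (again via Lemma~\ref{lem: atoms}). Here one uses that $\alpha$ is 1-1: since $a\ne 0$ and $\alpha$ is a de Vries morphism, $\alpha(a) = \bigvee\{\alpha(x) \mid x \prec a\}$, and one needs $\alpha(a) \ne 0$; then since $\dv{B}$ is atomic, $\alpha(a)$ lies above some atom $b$, and Lemma~\ref{lem: atoms} gives $\alpha_*({\uparrow}b) \in \zeta_\dv{A}(a)$. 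To get $\alpha(a)\ne 0$ from $a \ne 0$: by (DV7) there is $x \ne 0$ with $x \prec a$, so $x \prec a$ and also $x \prec x$ would be the wrong direction — instead use that $\alpha$ 1-1 plus (M1)–(M2) gives $\alpha(a)=0 \Rightarrow \alpha(a) = \alpha(a) \wedge \alpha(0)$ type reasoning; more robustly, a 1-1 de Vries morphism reflects $0$ because $\alpha(a) = 0$ and $\alpha(0)=0$ with $\alpha$ injective forces $a = 0$.

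The main obstacle I expect is (b), the density of the image, and specifically pinning down cleanly why a 1-1 de Vries morphism reflects the bottom element (so that $a \ne 0 \Rightarrow \alpha(a) \ne 0$), since de Vries morphisms are only meet-semilattice maps and injectivity interacts subtly with the non-Boolean behavior. Once that is in hand, atomicity of $\dv{B}$ and Lemma~\ref{lem: atoms} finish density quickly. The embedding claim (a) is essentially formal from the definition of the initial topology together with the injectivity supplied by Lemma~\ref{lem:char of jm density}, and complete regularity is a one-line consequence of $\tau_\alpha$ being an initial topology into a compact Hausdorff space.
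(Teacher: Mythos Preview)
Your proof is correct. The embedding and complete regularity parts match the paper's: both use that $\alpha_*|_{X_\dv{B}}$ is injective (Lemma~\ref{lem:char of jm density}) together with the initial-topology definition of $\tau_\alpha$ to conclude that $X_\dv{B}$ is homeomorphic to a subspace of the compact Hausdorff space $Y_\dv{A}$.

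For density, however, the paper takes a different route. Rather than arguing directly that $a\ne 0 \Rightarrow \alpha(a)\ne 0$ and then invoking atomicity of $\dv{B}$ to find an atom below $\alpha(a)$, the paper factors through the full Stone space $Y_\dv{B}$: it observes that (i) since $\alpha$ is 1-1, the de Vries dual $\alpha_* : Y_\dv{B} \to Y_\dv{A}$ is onto, and (ii) $X_\dv{B}$ (the principal ultrafilters) is dense in $Y_\dv{B}$ because $\dv{B}$ is atomic; the image under a continuous surjection of a dense set is dense. Your direct argument is more elementary and self-contained; the paper's is shorter because it outsources the work to standard de Vries duality facts. Either way the content is the same. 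Incidentally, your hesitation about whether a 1-1 de Vries morphism reflects $0$ is overcautious: the argument you land on is exactly right and has no subtlety---$\alpha(0)=0$ by (M1), so injectivity gives $\alpha(a)=0 \Rightarrow a=0$ immediately.
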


\begin{proof}
By Lemma~\ref{lem:char of jm density}, $\alpha_* : X_\dv{B} \to Y_\dv{A}$ is 1-1. Therefore, $X_\dv{B}$ is homeomorphic to $\alpha_*[X_\dv{B}]$
by Definition~\ref{def:tau_alpha}, and so $X_\dv{B}$ is completely regular. Since $\alpha$ is 1-1, $\alpha_* : Y_\dv{B} \to Y_\dv{A}$ is onto, and $X_{\dv{B}}$ is dense in $Y_\dv{B}$
because $\dv{B}$ is atomic. Thus, $\alpha_*[X_\dv{B}]$ is dense in $Y_\dv{A}$, and hence $\alpha_* : X_{\dv{B}} \to Y_\dv{A}$ is a compactification.
\end{proof}

We are ready to define the functor ${\sf C} : \deve \to \C$. If $\alpha : \dv{A} \to \dv{B}$ is an object in $\deve$, we define its image under $\sf C$ to be the compactification $\alpha_* : X_{\dv{B}} \to Y_\dv{A}$. For a morphism $(\rho, \sigma)$ in $\deve$
\[
\xymatrix@C5pc{
\dv{A} \ar[r]^{\alpha} \ar[d]_{\rho} & \dv{B} \ar[d]^{\sigma} \\
\dv{A}' \ar[r]_{\alpha'} & \dv{B}'
}
\]
we define ${\sf C}(\rho, \sigma)$ as the pair $(\sigma_+, \rho_*)$, where $\sigma_+$ is the Tarski dual of $\sigma$ and $\rho_*$ is the de Vries dual of $\rho$.
\[
\xymatrix@C5pc{
X_{\dv{B'}} \ar[r]^{\alpha'_*} \ar[d]_{\sigma_+} & Y_\dv{A'} \ar[d]^{\rho_*} \\
X_{\dv{B}} \ar[r]_{\alpha_*} & Y_\dv{A}
}
\]
If $\sigma_*$ is the de Vries dual of $\sigma$, then it is straightforward to see that $\sigma_*({\uparrow}b) = {\uparrow}\sigma_+(b)$
for each atom $b$ of $\dv{B}$. This together with $\sigma\circ \alpha = \alpha' \star \rho$ and de Vries duality yield that
$\alpha_* \circ \sigma_+ = \rho_* \circ \alpha'_*$, so ${\sf C}(\rho, \sigma)$ is a morphism in $\C$.

\begin{proposition} \label{prop: functor C}
${\sf C} : {\deve} \to \C$ is a contravariant functor.
\end{proposition}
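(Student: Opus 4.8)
The plan is to check the three defining conditions for a contravariant functor: that ${\sf C}$ sends objects of $\deve$ to objects of $\C$, that it sends morphisms to morphisms with the direction reversed, and that it preserves identities and composition. The first condition is already in hand: by Theorem~\ref{lem:compactification}, for every de Vries extension $\alpha : \dv{A} \to \dv{B}$ the map $\alpha_* : X_{\dv{B}} \to Y_{\dv{A}}$ is a compactification, hence an object of $\C$.

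For morphisms, fix a morphism $(\rho,\sigma)$ in $\deve$ from $\alpha : \dv{A} \to \dv{B}$ to $\alpha' : \dv{A}' \to \dv{B}'$, so that ${\sf C}(\rho,\sigma) = (\sigma_+, \rho_*)$ should be a morphism in $\C$ from ${\sf C}(\alpha')$ to ${\sf C}(\alpha)$. The commutativity $\alpha_* \circ \sigma_+ = \rho_* \circ \alpha'_*$ of the square displayed just before the proposition has already been noted, using $\sigma_*({\uparrow}b) = {\uparrow}\sigma_+(b)$, the relation $\sigma \circ \alpha = \alpha' \star \rho$, and de Vries duality. It remains to verify that the two legs are continuous. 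The map $\rho_* : Y_{\dv{A}'} \to Y_{\dv{A}}$ is continuous by de Vries duality. For $\sigma_+ : X_{\dv{B}'} \to X_{\dv{B}}$, recall from Definition~\ref{def:tau_alpha} that $\tau_\alpha$ is the least topology on $X_{\dv{B}}$ making $\alpha_*$ continuous; hence $\sigma_+$ is continuous with respect to $\tau_{\alpha'}$ and $\tau_\alpha$ precisely when the composite $\alpha_* \circ \sigma_+$ is continuous as a map $X_{\dv{B}'} \to Y_{\dv{A}}$. But $\alpha_* \circ \sigma_+ = \rho_* \circ \alpha'_*$ is a composite of continuous maps, since $\alpha'_*$ is continuous by the definition of $\tau_{\alpha'}$ and $\rho_*$ is continuous as just noted. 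Therefore $\sigma_+$ is continuous, and ${\sf C}(\rho,\sigma)$ is a morphism in $\C$.

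Finally, ${\sf C}$ preserves identities, because the Tarski dual of $1_{\dv{B}}$ is $1_{X_{\dv{B}}}$ and the de Vries dual of $1_{\dv{A}}$ is $1_{Y_{\dv{A}}}$, so ${\sf C}(1_{\dv{A}}, 1_{\dv{B}})$ is the identity on $\alpha_*$ in $\C$. For composition, let $(\rho_1,\sigma_1)$ and $(\rho_2,\sigma_2)$ be composable morphisms in $\deve$; by definition their composite is $(\rho_2 \star \rho_1, \sigma_2 \circ \sigma_1)$. Functoriality of Tarski duality gives $(\sigma_2 \circ \sigma_1)_+ = (\sigma_1)_+ \circ (\sigma_2)_+$, and functoriality of de Vries duality with respect to $\star$ gives $(\rho_2 \star \rho_1)_* = (\rho_1)_* \circ (\rho_2)_*$. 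Since composition in $\C$ is performed componentwise, it follows that ${\sf C}((\rho_2,\sigma_2) \circ (\rho_1,\sigma_1)) = {\sf C}(\rho_1,\sigma_1) \circ {\sf C}(\rho_2,\sigma_2)$, and so ${\sf C}$ is a contravariant functor. The only step that is more than bookkeeping is the continuity of $\sigma_+$; it is the point where the choice of $\tau_\alpha$ as an initial topology is used, together with the already-verified commutativity of the square, so no separate combinatorial argument on atoms is needed.
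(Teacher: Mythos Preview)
Your proof is correct and follows essentially the same approach as the paper: objects via Theorem~\ref{lem:compactification}, commutativity of the square via the discussion preceding the proposition, and composition via the functoriality of de Vries and Tarski duality. You are more explicit than the paper on one point---you verify the continuity of $\sigma_+$ using the initial-topology property of $\tau_\alpha$, whereas the paper leaves this implicit in asserting that ${\sf C}(\rho,\sigma)$ is a morphism in $\C$---but this is a welcome clarification rather than a different argument.
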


\begin{proof}
By Theorem~\ref{lem:compactification}, if $\alpha : \dv{A} \to \dv{B}$ is an object in $\deve$, then ${\sf C}(\alpha)$ is an object in $\C$,
and a morphism in $\deve$ yields a morphism in $\C$ by the discussion above. From the description of how $\sf C$ acts on morphisms, it is
clear that $\sf C$ sends identity morphisms to identity morphisms. Given a pair $(\rho_1,\sigma_1), (\rho_2,\sigma_2)$ of composable morphisms,
\[
\xymatrix@C5pc{
\dv{A}_1 \ar[r]^{\alpha_1} \ar[d]_{\rho_1} \ar@/_2pc/[dd]_{\rho_2 \star \rho_1} & \dv{B}_1 \ar[d]^{\sigma_1}
\ar@/^2pc/[dd]^{\sigma_2 \circ \sigma_1} \\
\dv{A}_2 \ar[r]^{\alpha_2} \ar[d]_{\rho_2} & \dv{B}_2 \ar[d]^{\sigma_2} \\
\dv{A}_3 \ar[r]_{\alpha_3} & \dv{B}_3
}
\]
$\sf C$ yields the following diagram
\[
\xymatrix@C3pc{
X_{\dv{B}_3}  \ar[r]^{(\alpha_3)_*} \ar[d]_{(\sigma_2)_+} \ar@/_4pc/[dd]_{(\sigma_1)_+ \circ (\sigma_2)_+} & Y_{\dv{A}_3} \ar[d]^{(\rho_2)_*}
\ar@/^4pc/[dd]^{(\rho_1)_* \circ (\rho_2)_*} \\
X_{\dv{B}_2} \ar[r]^{(\alpha_2)_*} \ar[d]_{(\sigma_1)_+} & Y_{\dv{A}_2} \ar[d]^{(\rho_1)_*} \\
X_{\dv{B}_1} \ar[r]_{(\alpha_1)_*} & Y_{\dv{A}_1}
}
\]
By de Vries duality, $(\rho_1)_* \circ (\rho_2)_* = (\rho_2 \star \rho_1)_*$, and
$(\sigma_2 \circ \sigma_1)_+ = (\sigma_1)_+ \circ (\sigma_2)_+$ by Tarski duality.
Therefore, ${\sf C}((\rho_2,\sigma_2) \circ (\rho_1,\sigma_1)) = {\sf C}(\rho_1,\sigma_1) \circ {\sf C}(\rho_2,\sigma_2)$.
Thus, $\sf C$ is a contravariant functor.
\end{proof}

\begin{theorem} \label{thm: duality}
The functors $\sf E$ and $\sf C$ yield a dual equivalence between $\C$ and $\deve$.
\end{theorem}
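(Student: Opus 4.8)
The plan is to prove the dual equivalence in the standard way, by exhibiting natural isomorphisms $\Phi : 1_\C \Rightarrow {\sf C}\circ{\sf E}$ and $\Psi : 1_\deve \Rightarrow {\sf E}\circ{\sf C}$ assembled from the units of de Vries duality and Tarski duality (that ${\sf E}$ and ${\sf C}$ are well-defined contravariant functors is Propositions~\ref{prop: functor E} and~\ref{prop: functor C}, so both composites are covariant). For an object $e : X \to Y$ of $\C$ one computes $({\sf C}\circ{\sf E})(e) = (e^{-1})_* : X_{\wp(X)} \to Y_{\RO(Y)}$, where $X_{\wp(X)}$ is the set of atoms of $\wp(X)$ equipped with the topology $\tau_{e^{-1}}$ of Definition~\ref{def:tau_alpha} and $Y_{\RO(Y)}$ is the space of ends of $\RO(Y)$; for an object $\alpha : \dv{A} \to \dv{B}$ of $\deve$ one has $({\sf E}\circ{\sf C})(\alpha) = (\alpha_*)^{-1} : \RO(Y_\dv{A}) \to \wp(X_\dv{B})$. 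I would set $\Phi_e := (\eta_X, \xi_Y)$ and $\Psi_\alpha := (\zeta_\dv{A}, \vartheta_\dv{B})$, where $\eta$ and $\vartheta$ are the Tarski units and $\xi$ and $\zeta$ are the de Vries units recalled in Section~\ref{sec: de Vries duality} and Section~\ref{sec: de Vries}.

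Lemma~\ref{lem: atoms} is the key computational input for both transformations. Applying it with $\alpha = e^{-1}$, $\dv{B} = \wp(X)$, and the atom $\{x\}$ shows that, for $U \in \RO(Y)$, one has $e(x) \in U$ iff $U \in (e^{-1})_*({\uparrow}\{x\})$; hence $(e^{-1})_* \circ \eta_X = \xi_Y \circ e$. Since $e$ and $\xi_Y$ are embeddings and, by Definition~\ref{def:tau_alpha}, $\tau_{e^{-1}}$ is the initial topology on $X_{\wp(X)}$ with respect to $(e^{-1})_*$ — which is injective on $X_{\wp(X)}$ by Lemma~\ref{lem:char of jm density} — it follows that $\eta_X$ is a homeomorphism, not merely a bijection. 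Thus $\Phi_e$ is a pair of homeomorphisms making the square that defines a $\C$-morphism commute, and as noted in Section~\ref{sec: compactifications} such pairs are exactly the isomorphisms of $\C$.

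On the algebraic side, Lemma~\ref{lem: atoms} together with axiom (M4) shows that for each $a \in \dv{A}$ an atom $b$ of $\dv{B}$ lies below $\alpha(a)$ iff it lies in $\big((\alpha_*)^{-1}\star\zeta_\dv{A}\big)(a)$; that is, $\vartheta_\dv{B}\circ\alpha = (\alpha_*)^{-1}\star\zeta_\dv{A}$. Since $\vartheta_\dv{B}$ is a complete Boolean homomorphism, $\vartheta_\dv{B}\circ\alpha = \vartheta_\dv{B}\star\alpha$ by Remark~\ref{rem: composition}(2), so this equality says precisely that the square defining a $\deve$-morphism commutes in $\dev$; hence $\Psi_\alpha$ is a morphism of $\deve$. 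It is an isomorphism because $\zeta_\dv{A}$ is a de Vries isomorphism and $\vartheta_\dv{B}$ a Boolean isomorphism of complete Boolean algebras, so the inverse pair $(\zeta_\dv{A}^{-1}, \vartheta_\dv{B}^{-1})$ is again a de Vries morphism paired with a complete Boolean homomorphism, and it makes the inverse square commute.

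It then remains to verify naturality. For a morphism $(f,g) : e \to e'$ in $\C$, the naturality square for $\Phi$ splits into $(f^{-1})_+ \circ \eta_X = \eta_{X'} \circ f$ and $(g^*)_* \circ \xi_Y = \xi_{Y'} \circ g$, which are precisely the naturality of the Tarski unit $\eta$ and of the de Vries unit $\xi$; likewise, for a morphism $(\rho,\sigma) : \alpha \to \alpha'$ in $\deve$, the naturality square for $\Psi$ splits into $(\rho_*)^* \star \zeta_\dv{A} = \zeta_{\dv{A}'} \star \rho$ and $(\sigma_+)^{-1} \circ \vartheta_\dv{B} = \vartheta_{\dv{B}'} \circ \sigma$, which are the naturality of $\zeta$ and of $\vartheta$. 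With the natural isomorphisms $\Phi$ and $\Psi$ in hand, ${\sf E}$ and ${\sf C}$ form a dual equivalence. I expect the main obstacle to be the topological bookkeeping showing that $\eta_X$ is a homeomorphism for the initial topology $\tau_{e^{-1}}$ (the dual fact that $\alpha_*$ is a compactification is already packaged into Theorem~\ref{lem:compactification}), together with the care needed to reconcile the $\star$-composition of $\deve$ with ordinary function composition before invoking commutativity of the two defining squares; once these points are settled, everything else is an assembly of the two dualities recalled earlier.
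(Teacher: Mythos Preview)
Your proposal is correct and follows essentially the same route as the paper: you build the natural isomorphisms componentwise from the Tarski and de Vries units $(\eta_X,\xi_Y)$ and $(\zeta_\dv{A},\vartheta_\dv{B})$, use Lemma~\ref{lem: atoms} to verify that the defining squares commute, invoke the initial-topology definition of $\tau_{e^{-1}}$ to upgrade $\eta_X$ from a bijection to a homeomorphism, and reduce naturality to that of $\eta,\xi,\zeta,\vartheta$. The only cosmetic difference is that you spell out the $\star$-versus-$\circ$ bookkeeping and the invertibility of $\Psi_\alpha$ a bit more explicitly than the paper does.
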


\begin{proof}
Propositions~\ref{prop: functor E} and \ref{prop: functor C} show that $\sf E$ and $\sf C$ are contravariant functors.  We first show that $\sf CE$ is naturally isomorphic to the identity functor on $\C$. The functor $\sf E$ sends $e : X \to Y$ to $e^{-1} : \RO(Y) \to \wp(X)$. Then $\sf C$ sends this de Vries extension to $(e^{-1})_* : X_{\wp(X)} \to Y_{\RO(Y)}$. We have the following diagram.
\[
\xymatrix@C4pc{
X \ar[r]^e \ar[d]_{\eta_X} & Y \ar[d]^{\xi_Y} \\
X_{\wp(X)} \ar[r]_{(e^{-1})_*} & Y_{\RO(Y)}
}
\]
The map $\xi_Y$ is a homeomorphism by de Vries duality, and $\eta_X$ is a bijection by Tarski duality.
The map $(e^{-1})_*$ sends $\{x\}$ to $\{ U \in \RO(Y) \mid e(x) \in U\}$, so the diagram is easily seen to be commutative.
Therefore, $\eta_X$ is also a homeomorphism by the definition of the topology on $X_{\wp(X)}$.

Let $(f,g)$ be a morphism in $\C$.
\[
\xymatrix@C5pc{
X \ar[r]^e \ar[d]_f & Y \ar[d]^g \\
X' \ar[r]_{e'} & Y'
}
\]
Then ${\sf E}(f,g) = (g^*, f^{-1})$, and so ${\sf CE}(f,g) = {\sf C}(g^*, f^{-1}) = ((f^{-1})_+, (g^*)_*)$. Thus, we have another morphism in $\C$.
\[
\xymatrix@C5pc{
X_{\wp(X)} \ar[r]^{(e^{-1})_*} \ar[d]_{(f^{-1})_+} & Y_{\RO(Y)} \ar[d]^{(g^*)_*} \\
X_{\wp(X')} \ar[r]_{((e')^{-1})_*} & Y_{\RO(Y')}
}
\]
We define a natural transformation $p$ from the identity functor to $\sf{CE}$ as follows. For a compactification $e : X \to Y$, we set $p_e = (\eta_X, \xi_Y)$. Since $\eta_X$ and $\xi_Y$ are homeomorphisms, $p_e$ is an isomorphism in $\C$.

We have the following diagram.
\[
\xymatrix@C5pc{
& X_{\wp(X)} \ar[rr]^{(e^{-1})_*} \ar[dd]^(.3){(f^{-1})_+} && Y_{\RO(Y)} \ar[dd]^{(g^*)_*} \\
X \ar[ru]^{\eta_X} \ar[dd]_{f} \ar[rr]^(.7){e} && Y \ar[dd]_(.3){g} \ar[ru]^{\xi_Y} & \\
& X_{\wp(X')}  \ar[rr]_(.3){((e')^{-1})_*} && Y_{\RO(Y')} \\
X'  \ar[rr]_{e'} \ar[ru]^{\eta_{X'}} && Y'  \ar[ru]^{\xi_{Y'}} &
}
\]
The front and back faces of this cube are commutative since $(f,g)$ and ${\sf CE}(f,g)$ are morphisms in $\C$. The top and bottom faces are commutative since $p_e$ and $p_{e'}$ are morphisms in $\C$. The left face is commutative by Tarski duality, and the right face is commutative by de Vries duality. Thus, $p$ is a natural isomorphism.

We next show that $\sf EC$ is naturally isomorphic to the identity functor on $\deve$. Given a de Vries extension $\alpha : \dv{A} \to \dv{B}$, the functor $\sf C$ sends it to $\alpha_* : X_{\dv{B}} \to Y_\dv{A}$. This is then sent by $\sf E$ to $\alpha_*^{-1} : \RO(Y_\dv{A}) \to \wp(X_{\dv{B}})$. We have the following diagram.
\[
\xymatrix@C4pc{
\dv{A} \ar[r]^{\alpha} \ar[d]_{\zeta_\dv{A}} & \dv{B} \ar[d]^{\vartheta_\dv{B}} \\
\RO(Y_\dv{A}) \ar[r]_{\alpha_*^{-1}} & \wp(X_{\dv{B}})
}
\]
The map $\zeta_\dv{A}$ is a de Vries isomorphism by de Vries duality and $\vartheta_\dv{B}$ is an isomorphism by Tarski duality.
To see that the diagram commutes,
\[
\vartheta_\dv{B}(\alpha(b)) = \vartheta_\dv{B}\left(\bigvee \{ \alpha(a) \mid a \prec b\}\right) =
\bigcup \{ \vartheta_\dv{B}(\alpha(a)) \mid a \prec b\}.
\]
On the other hand, $\alpha_*^{-1}(\zeta_\dv{A}(b)) = \bigcup \{ \alpha_*^{-1}(\zeta_\dv{A}(a)) \mid a \prec b\}$.
We have $\vartheta_\dv{B}(\alpha(a)) = \{ x \in X_\dv{B} \mid x \le \alpha(a) \}$. By Lemma~\ref{lem: atoms},
this is equal to $\alpha_*^{-1}(\zeta_\dv{A}(a))$. Thus, the diagram commutes, and so
$(\zeta_\dv{A}, \vartheta_\dv{B})$ is a morphism in $\deve$.

We define a natural transformation $q$ from the identity functor to $\sf{EC}$ as follows. For a de Vries extension $\alpha : \dv{A} \to \dv{B}$, we set $q_\alpha = (\zeta_\dv{A}, \vartheta_\dv{B})$, a morphism in $\deve$. Since $\zeta_\dv{A}$ and $\vartheta_\dv{B}$ are isomorphisms, $q_\alpha$ is an isomorphism in $\deve$.

To show naturality, let $(\rho, \sigma)$ be a morphism in $\deve$
\[
\xymatrix@C5pc{
\dv{A} \ar[r]^{\alpha} \ar[d]_{\rho} & \dv{B} \ar[d]^{\sigma} \\
\dv{A}' \ar[r]_{\alpha'} & \dv{B}'
}
\]
We have the following diagram.
\[
\xymatrix@C5pc{
& \RO(Y_\dv{A}) \ar[rr]^{(\alpha_*)^{-1}} \ar[dd]^(.3){(\rho_*)^*} && \wp(X_\dv{B}) \ar[dd]^{\sigma_+^{-1}} \\
\dv{A} \ar[ru]^{\zeta_\dv{A}} \ar[dd]_{\rho} \ar[rr]^(.7){\alpha} && \dv{B} \ar[dd]_(.3){\sigma} \ar[ru]^{\vartheta_\dv{B}} & \\
& \RO(Y_{\dv{A}'})  \ar[rr]_(.3){(\alpha'_*)^{-1}} && \wp(X_{\dv{B}'}) \\
\dv{A}'  \ar[rr]_{\alpha'} \ar[ru]^{\zeta_\dv{A'}} && \dv{B}'  \ar[ru]^{\vartheta_\dv{B'}} &
}
\]
The front and back faces of this cube are commutative because $(\rho, \sigma)$ and $\sf{EC}(\rho, \sigma)$ are morphisms in $\deve$. The top and bottom faces are commutative because $q_\alpha$ and $q_{\alpha'}$ are morphisms in $\deve$. The left face is commutative by de Vries duality and the right face is commutative by Tarski duality.
Therefore, $q$ is a natural isomorphism. Thus, $\sf E$ and $\sf C$ yield a dual equivalence between $\C$ and $\deve$.
\end{proof}

The use of both de Vries and Tarski dualities was critical in the proof of Theorem~\ref{thm: duality}. In the next remark we indicate how these
two dualities can be viewed as special cases of our duality.

\begin{remark} \label{dualities remark}
\begin{enumerate}
\item[]
\item If $X \in \KHaus$, then the identity $X \to X$ is the only compactification of $X$. The corresponding de Vries extension is
$\RO(X) \hookrightarrow \wp(X)$. In \cite[Sec.~5]{BMO18b} we will characterize such extensions,
and we will show that $\dev$ is equivalent to a full subcategory of $\deve$. It is also straightforward that $\KHaus$ is equivalent to a
full subcategory of $\C$. This leads to the following commutative diagram, where the functors $\KHaus \leftrightarrow \dev$ are those
of de Vries duality, and those between $\C$ and $\deve$ are $\sf E$ and $\sf C$.
\[
\xymatrix@C5pc{
\KHaus \ar@{<->}[r] \ar@{^{(}->}[d] & \dev \ar@{^{(}->}[d] \\
\C \ar@{<->}[r] & \deve
}
\]
Consequently, the duality of Theorem~\ref{thm: duality} is an extension of de Vries duality.
\item There is a full and faithful functor ${\sf Set} \to \C$ which sends $X$ to the Stone-\v{C}ech compactification $s : X \to \beta X$,
where $X$ is viewed as a discrete space. Since $\beta X$ is extremally disconnected, $\RO(X)$ is isomorphic to the Boolean algebra of clopen
subsets of $\beta X$, which in turn is isomorphic to $\wp(X)$. Therefore, the de Vries extension $s^{-1}:\RO(\beta X)\to\wp(X)$ is an isomorphism
of Boolean algebras. There is also a full and faithful functor ${\sf CABA} \to \deve$ which sends $\dv{B}$ to the identity
map $\dv{B} \to \dv{B}$. Thus, $\sf Set$ is equivalent to a full subcategory of $\C$, $\sf CABA$ is equivalent to a full subcategory
of $\deve$, and the following diagram is commutative, where the functors ${\sf Set} \leftrightarrow {\sf CABA}$ are those of Tarski duality.
\[
\xymatrix@C5pc{
{\sf Set} \ar@{<->}[r] \ar@{^{(}->}[d] & {\sf CABA} \ar@{^{(}->}[d] \\
\C \ar@{<->}[r] & \deve
}
\]
Consequently, the duality of Theorem~\ref{thm: duality} is an extension of Tarski duality.
\end{enumerate}
\end{remark}

\section{Maximal de Vries extensions and Stone-\v{C}ech compactifications} \label{sec: maximal}

In this final section we introduce maximal de Vries extensions and prove that they correspond to Stone-\v{C}ech compactifications. From this we derive that the category $\creg$ of completely regular spaces and continuous maps is dually equivalent to the full subcategory $\Mdeve$ of $\deve$ consisting of maximal extensions.

Let
\[
\xymatrix{
X \ar[r]^{e} \ar[rd]_{e'} & Y \ar[d]^{f} \\
& Y'
}
\]
be a commutative diagram of compactifications of $X$. The functor $\sf E$ sends this diagram to the commutative diagram
\[
\xymatrix{
\RO(Y')  \ar[d]_{f^*} \ar[r]^{(e')^{-1}} & \wp(X) \\
\RO(Y) \ar[ru]_{e^{-1}}
}
\]
By Lemma~\ref{lem:RO(X)}, $e^{-1}[\RO(Y)] = \RO(X) = (e')^{-1}[\RO(Y')]$. This motivates the following definition.

\begin{definition}
We call two de Vries extensions $\alpha : \dv{A} \to \dv{B}$ and $\gamma : \dv{C} \to \dv{B}$ \emph{compatible} if $\alpha[\dv{A}]=\gamma[\dv{C}]$.
\end{definition}

\begin{lemma} \label{lem: homeomorphism}
Two de Vries extensions $\alpha : \dv{A} \to \dv{B}$ and $\gamma : \dv{C} \to \dv{B}$ are compatible iff the topologies $\tau_\alpha$ and $\tau_\gamma$ on $X_\dv{B}$ given in Definition~\ref{def:tau_alpha} are equal.
\end{lemma}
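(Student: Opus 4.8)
The plan is to prove both implications by exploiting the fact that, for a de Vries extension $\delta : \dv{D} \to \dv{B}$, the topology $\tau_\delta$ on $X_\dv{B}$ is by Definition~\ref{def:tau_alpha} the coarsest topology making $\delta_* : X_\dv{B} \to Y_\dv{D}$ continuous, and that (by Lemma~\ref{lem:char of jm density}) this map is $1$-$1$, so $X_\dv{B}$ with $\tau_\delta$ is homeomorphic to $\delta_*[X_\dv{B}]$ with the subspace topology from $Y_\dv{D}$. The key observation is that a subbasis for $\tau_\delta$ is given by the sets $\delta_*^{-1}(\zeta_\dv{D}(d))$ for $d \in \dv{D}$, and that by Lemma~\ref{lem: atoms} we have $\delta_*^{-1}(\zeta_\dv{D}(d)) = \{ b \in X_\dv{B} \mid b \le \delta(d)\}$, i.e.\ the trace on $X_\dv{B}$ of the element $\delta(d) \in \dv{B}$. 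Thus $\tau_\delta$ depends only on the \emph{image} $\delta[\dv{D}]$, not on $\dv{D}$ itself: explicitly, $\tau_\delta$ is the topology on $X_\dv{B}$ generated by $\{ \widehat{u} \mid u \in \delta[\dv{D}]\}$, where $\widehat{u} := \{ b \in X_\dv{B} \mid b \le u\}$.

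The first (easy) direction follows immediately from this. If $\alpha$ and $\gamma$ are compatible, then $\alpha[\dv{A}] = \gamma[\dv{C}]$, so the two subbases $\{\widehat{u} \mid u \in \alpha[\dv{A}]\}$ and $\{\widehat{u} \mid u \in \gamma[\dv{C}]\}$ coincide, and hence $\tau_\alpha = \tau_\gamma$.

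For the converse, suppose $\tau_\alpha = \tau_\gamma$; I want to conclude $\alpha[\dv{A}] = \gamma[\dv{C}]$. By symmetry it suffices to show $\alpha[\dv{A}] \subseteq \gamma[\dv{C}]$. Fix $a \in \dv{A}$ and set $u = \alpha(a) \in \dv{B}$. Since $\dv{B}$ is atomic, $u = \bigvee \widehat{u}$, so it is enough to recover $u$ from the set $\widehat{u} \in \tau_\alpha = \tau_\gamma$ as a join of elements of $\gamma[\dv{C}]$. The natural move is: $\widehat{u}$ is open in $\tau_\gamma$, so it is a union of basic opens from the subbasis $\{\widehat{v} \mid v \in \gamma[\dv{C}]\}$ — but basic opens are finite intersections of subbasic ones, and since $\gamma$ preserves finite meets (M2), a finite intersection $\widehat{\gamma(c_1)} \cap \cdots \cap \widehat{\gamma(c_n)}$ equals $\widehat{\gamma(c_1 \wedge \cdots \wedge c_n)}$, again of the form $\widehat{v}$ with $v \in \gamma[\dv{C}]$. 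Hence $\widehat{u} = \bigcup_{i} \widehat{v_i}$ for some $v_i \in \gamma[\dv{C}]$, which gives $u = \bigvee \widehat{u} = \bigvee_i \bigvee \widehat{v_i} = \bigvee_i v_i$ (using again that $\dv{B}$ is atomic, so $v_i = \bigvee \widehat{v_i}$). This exhibits $u$ as a join from $\gamma[\dv{C}]$. To finish, I then use join-meet density of $\gamma[\dv{C}]$, equivalently (Remark~\ref{rem: jm density}(3)) meet-join density: by Remark~\ref{rem: jm density}(2) every coatom of $\dv{B}$ is a join from $\gamma[\dv{C}]$; dualizing the displayed argument (work with $\lnot u$, its complement being a union of subbasic closed sets, i.e.\ of sets $\widehat{\lnot \gamma(c)}$, and $\gamma$ together with De Morgan handled via Remark~\ref{rem: morphism properties}) one shows $\lnot u$ is likewise a join from $\gamma[\dv{C}]$. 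Actually a cleaner route is available: having shown every element of $\alpha[\dv{A}]$ is a join from $\gamma[\dv{C}]$ and (by symmetry) every element of $\gamma[\dv{C}]$ is a join from $\alpha[\dv{A}]$, the subalgebras (in the join-semilattice sense) generated under arbitrary joins agree; combined with the fact that join-meet density pins down $\gamma[\dv{C}]$ as exactly the set of elements that are meets of the relevant sort, one deduces $\alpha[\dv{A}] = \gamma[\dv{C}]$.

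I expect the main obstacle to be precisely this last step: upgrading ``every element of $\alpha[\dv{A}]$ is a \emph{join} from $\gamma[\dv{C}]$'' to ``$\alpha[\dv{A}] = \gamma[\dv{C}]$''. An element being a join from $\gamma[\dv{C}]$ does not by itself force it to lie in $\gamma[\dv{C}]$, so one must use the structure of de Vries extensions more carefully — most likely by showing that for a de Vries extension $\gamma : \dv{C} \to \dv{B}$, an element $u \in \dv{B}$ lies in $\gamma[\dv{C}]$ iff $u$ is simultaneously a join from $\gamma[\dv{C}]$ and a meet from $\gamma[\dv{C}]$, and then checking that every $u \in \alpha[\dv{A}]$ has both properties relative to $\gamma[\dv{C}]$ (the ``join'' property from the topological argument above applied to both $u$ and $\lnot u$, then dualized via Remark~\ref{rem: jm density}, and the ``meet'' property analogously). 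The bookkeeping with (M2)–(M4) and the atomicity of $\dv{B}$ is routine but needs to be done with care.
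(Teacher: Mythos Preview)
Your forward direction is correct and essentially the paper's argument: via Lemma~\ref{lem: atoms}, the basis $\alpha_*^{-1}(\zeta_\dv{A}[\dv{A}])$ for $\tau_\alpha$ equals $\{\widehat{u} \mid u \in \alpha[\dv{A}]\}$, so $\alpha[\dv{A}] = \gamma[\dv{C}]$ forces $\tau_\alpha = \tau_\gamma$.

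The converse, however, has a genuine gap that your own diagnosis already flags but does not close. You correctly show that every $u \in \alpha[\dv{A}]$ is a \emph{join} from $\gamma[\dv{C}]$, and you correctly observe that this alone does not give $u \in \gamma[\dv{C}]$. Your proposed repair---show that $u$ is also a \emph{meet} from $\gamma[\dv{C}]$, and that ``join and meet from $\gamma[\dv{C}]$'' characterizes membership in $\gamma[\dv{C}]$---breaks down at the meet step. The suggested dualization (``apply the topological argument to $\lnot u$'') fails because $\lnot\alpha(a)$ is generally \emph{not} in $\alpha[\dv{A}]$ (de Vries morphisms need not preserve negation), so $\widehat{\lnot\alpha(a)}$ need not be open in $\tau_\alpha$, and the argument that produced the join decomposition does not apply to it. Working instead with $\alpha(\lnot a)$ gives an open set, but $\alpha(\lnot a) \le \lnot\alpha(a)$ may be strict, so you recover only $\lnot\alpha(\lnot a)$ as a meet of elements $\lnot\gamma(c_j)$, and these are again not in $\gamma[\dv{C}]$. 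Remark~\ref{rem: jm density} concerns density of the whole image, not membership of a single element, so it does not bridge the gap either.

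The paper sidesteps all of this with one stroke: by Theorem~\ref{lem:compactification}, $\alpha_* : X_\dv{B} \to Y_\dv{A}$ is a compactification, so Lemma~\ref{lem:RO(X)} says $\alpha_*^{-1} : \RO(Y_\dv{A}) \to \RO(X_\dv{B},\tau_\alpha)$ is a Boolean isomorphism. Since $\zeta_\dv{A}[\dv{A}] = \RO(Y_\dv{A})$ by de Vries duality, Lemma~\ref{lem: atoms} then identifies $\{\widehat{u} \mid u \in \alpha[\dv{A}]\}$ with \emph{all} of $\RO(X_\dv{B},\tau_\alpha)$, not merely a basis for $\tau_\alpha$. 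The same holds for $\gamma$, and once $\tau_\alpha = \tau_\gamma$ the two collections of regular opens coincide; pulling back through the injective map $u \mapsto \widehat{u}$ gives $\alpha[\dv{A}] = \gamma[\dv{C}]$ immediately. The missing idea in your attempt is precisely this identification of $\alpha[\dv{A}]$ (via $\vartheta_\dv{B}$) with the regular open sets of $\tau_\alpha$, which is what Lemma~\ref{lem:RO(X)} supplies.
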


\begin{proof}
First suppose that $\alpha : \dv{A} \to \dv{B}$ and $\gamma : \dv{C} \to \dv{B}$ are compatible, so $\alpha[\dv{A}] = \gamma[\dv{C}]$. By definition,
$\tau_\alpha$ is the least topology making $\alpha_*:X_{\dv{B}}\to Y_\dv{A}$ continuous and $\tau_\gamma$ is the least topology making $\gamma_*:X_{\dv{B}}\to Y_\dv{C}$ continuous. Since $\zeta[\dv{A}]$ is a basis for the topology on $Y_\dv{A}$, we have that $\alpha_*^{-1}(\zeta[\dv{A}])$ is a basis for $\tau_\alpha$, and similarly $\gamma_*^{-1}(\zeta[\dv{C}])$ is a basis for $\tau_\gamma$. Therefore, it is sufficient to show that $\alpha_*^{-1}(\zeta[\dv{A}])=\gamma_*^{-1}(\zeta[\dv{C}])$. But this follows from $\alpha[\dv{A}] = \gamma[\dv{C}]$ by Lemma~\ref{lem: atoms}.

For the converse, suppose that $\tau_\alpha=\tau_\gamma$. By Lemma~\ref{lem:RO(X)}, $\alpha_*^{-1}:\RO(Y_\dv{A}) \to \RO(X_{\dv{B}})$ and $\gamma_*^{-1}:\RO(Y_\dv{C}) \to \RO(X_{\dv{B}})$ are Boolean isomorphisms. By de Vries duality, $\zeta[\dv{A}] = \RO(Y_\dv{A})$ and $\zeta[\dv{C}] = \RO(Y_\dv{C})$. Therefore, since $\tau_\alpha=\tau_\gamma$, we have $\alpha_*^{-1}(\zeta[\dv{A}])=\gamma_*^{-1}(\zeta[\dv{C}])$. By Lemma~\ref{lem: atoms}, $\alpha[\dv{A}]=\gamma[\dv{C}]$, so $\alpha : \dv{A} \to \dv{B}$ and $\gamma : \dv{C} \to \dv{B}$ are compatible.
\end{proof}

\begin{definition}
\begin{enumerate}
\item[]
\item We say that a de Vries extension $\alpha : \dv{A} \to \dv{B}$ is \emph{maximal} provided for every compatible de Vries extension
$\gamma : \dv{C} \to \dv{B}$, there is a de Vries morphism $\delta: \dv{C} \to \dv{A}$ such that $\alpha \star \delta = \gamma$.
\[
\xymatrix{
\dv{A} \ar[rr]^{\alpha} && \dv{B} \\
& \dv{C} \ar[lu]^{\delta} \ar[ru]_{\gamma} &
}
\]
\item Let $\Mdeve$ be the full subcategory of $\deve$ consisting of maximal de Vries extensions.
\end{enumerate}
\end{definition}

\begin{theorem} \label{thm: Stone-Cech}
Let $e : X \to Y$ be a compactification and let $s : X \to \beta X$ be the Stone-\v{C}ech compactification of $X$. Then the following conditions are equivalent.
\begin{enumerate}
\item The de Vries extension $e^{-1}: \RO(Y) \to \wp(X)$ is maximal.
\item $e$ is isomorphic to $s$ in $\C$.
\item $e$ is equivalent to $s$.
\end{enumerate}
\end{theorem}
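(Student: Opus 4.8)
The plan is to prove all three equivalences by reducing the algebraic notion of maximality to the topological statement that $e$ is the largest compactification of $X$. The implications (2)$\Leftrightarrow$(3) cost almost nothing: (2)$\Rightarrow$(3) is precisely Theorem~\ref{prop: equivalent to beta}, and (3)$\Rightarrow$(2) follows from the observation in Section~\ref{sec: compactifications} that equivalent compactifications are always isomorphic in $\C$. So the content lies in proving (1)$\Leftrightarrow$(3).

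For (3)$\Rightarrow$(1), I would assume $e$ is equivalent to $s$ and take a de Vries extension $\gamma : \dv{C} \to \wp(X)$ compatible with $e^{-1}$. By Lemma~\ref{lem: homeomorphism}, $\tau_\gamma = \tau_{e^{-1}}$ on $X_{\wp(X)}$, and since $\eta_X : X \to (X_{\wp(X)}, \tau_{e^{-1}})$ is a homeomorphism (as shown in the proof of Theorem~\ref{thm: duality}), the composite $\tilde\gamma := \gamma_* \circ \eta_X : X \to Y_\dv{C}$ is a compactification of $X$ by Theorem~\ref{lem:compactification}. Because $s$ is the largest compactification and $e$ is equivalent to $s$, we get $\tilde\gamma \le e$; fix a continuous $f : Y \to Y_\dv{C}$ with $f \circ e = \tilde\gamma$. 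Then $(\func{id}_X, f)$ is a morphism $(e : X \to Y) \to (\tilde\gamma : X \to Y_\dv{C})$ in $\C$, and applying ${\sf E}$ gives the morphism $(f^*, \func{id}_{\wp(X)})$ in $\deve$; by the definition of a $\deve$-morphism this says $\tilde\gamma^{-1} = e^{-1} \star f^*$. Unwinding the natural isomorphism $q$ of the proof of Theorem~\ref{thm: duality}, together with the fact that $\star$-composition with the de Vries isomorphism $\zeta_\dv{C}$ agrees with ordinary composition, identifies $\gamma$ with $\tilde\gamma^{-1} \star \zeta_\dv{C}$. Hence $\gamma = e^{-1} \star (f^* \star \zeta_\dv{C})$, so $\delta := f^* \star \zeta_\dv{C} : \dv{C} \to \RO(Y)$ is the de Vries morphism demanded by maximality.

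For (1)$\Rightarrow$(3), I would assume $e^{-1}$ is maximal. By Lemma~\ref{lem:RO(X)}, $e^{-1}[\RO(Y)] = \RO(X) = s^{-1}[\RO(\beta X)]$, so $s^{-1} : \RO(\beta X) \to \wp(X)$ is a de Vries extension compatible with $e^{-1}$; maximality then gives a de Vries morphism $\delta : \RO(\beta X) \to \RO(Y)$ with $e^{-1} \star \delta = s^{-1}$, so that $(\delta, \func{id}_{\wp(X)})$ is a morphism $s^{-1} \to e^{-1}$ in $\deve$. Applying ${\sf C}$ yields the morphism $(\func{id}, \delta_*)$ in $\C$ from $(e^{-1})_*$ to $(s^{-1})_*$, and transporting it along the natural isomorphism $p$ of the proof of Theorem~\ref{thm: duality} (with $p_e = (\eta_X, \xi_Y)$ and $p_s = (\eta_X, \xi_{\beta X})$) produces a morphism $(\func{id}_X, g)$ in $\C$ from $e$ to $s$, that is, a continuous $g : Y \to \beta X$ with $g \circ e = s$. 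This gives $s \le e$, and since $e \le s$ always (as $s$ is the largest compactification of $X$), $e$ is equivalent to $s$.

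The conceptual point is that de Vries maximality of $e^{-1}$ says exactly that $e$ dominates every compactification of $X$. The expected obstacle is purely bookkeeping: one must carefully keep the composition $\star$ of $\dev$ separate from ordinary function composition (invoking Remark~\ref{rem: composition}(2) and the fact that $\star$-composing with a de Vries isomorphism returns ordinary composition), and must transport objects and morphisms faithfully through the natural isomorphisms $p$ and $q$ of the proof of Theorem~\ref{thm: duality} and through the homeomorphism $\eta_X$. Once those identifications are pinned down, each of the two nontrivial implications is a short diagram chase.
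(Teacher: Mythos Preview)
Your proof is correct and follows essentially the same strategy as the paper: both use Lemma~\ref{lem:RO(X)} to see that $e^{-1}$ and $s^{-1}$ are compatible, invoke maximality to obtain $\delta$, and use Lemma~\ref{lem: homeomorphism} together with Stone--\v{C}ech universality for the direction (3)$\Rightarrow$(1). The one noteworthy difference is in how you close the loop. The paper proves (1)$\Rightarrow$(2) directly: having obtained $\delta$ from maximality and $f^*$ from the canonical map $f:\beta X\to Y$, it observes that $\delta$ and $f^*$ are mutually inverse in $\dev$, so $f$ is a homeomorphism by de Vries duality. You instead prove (1)$\Rightarrow$(3) by pushing the $\deve$-morphism $(\delta,\func{id})$ through $\sf C$ and the natural isomorphism $p$ to produce a continuous $g:Y\to\beta X$ with $g\circ e=s$, yielding $s\le e$ and hence equivalence. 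Your route trades the small verification that $\delta$ and $f^*$ are inverse (which rests on density of $e(X)$ in $Y$ or, equivalently, on $e^{-1}$ being monic) for an appeal to the already-established duality of Theorem~\ref{thm: duality}; the paper's route is slightly more self-contained but needs that extra step. Either way the argument is short.
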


\begin{proof}
(1)$\Rightarrow$(2).
First suppose that $e^{-1}: \RO(Y) \to \wp(X)$ is a maximal de Vries extension. By Theorem~\ref{prop: extension of a compactification}, the Stone-\v{C}ech compactification $s:X\to\beta X$ yields a de Vries extension $s^{-1}: \RO(\beta X) \to \wp(X)$. By Lemma~\ref{lem:RO(X)}, $e^{-1}[\RO(Y)]=\RO(X)=s^{-1}[\RO(\beta X)]$. Therefore, $e^{-1}$ and $s^{-1}$ are compatible, so by maximality of $e^{-1}$, there is a
de Vries morphism $\delta: \RO(\beta X) \to \RO(Y)$ such that $e^{-1}\star\delta=s^{-1}$.
\[
\xymatrix{
\RO(Y) \ar[rr]^{e^{-1}} && \wp(X) \\
& \RO(\beta X) \ar[lu]^{\delta} \ar[ru]_{s^{-1}} &
}
\]
Since the Stone-\v{C}ech compactification is the largest compactification, we also have a continuous map $f:\beta X\to Y$ such that $f\circ s=e$.
\[
\xymatrix@C3pc{
X \ar[r]^{s} \ar[rd]_{e} & \beta X \ar[d]^{f} \\
& Y
}
\]
This induces a commutative diagram in $\dev$
\[
\xymatrix{
\RO(\beta X) \ar[rr]^{s^{-1}} && \wp(X) \\
& \RO(Y) \ar[lu]^{f^*} \ar[ru]_{e^{-1}} &
}
\]
From this it follows that $\delta,f^*$ are inverses of each other in $\dev$. Therefore, $f^*: \RO(Y) \to \RO(\beta X)$ is an isomorphism. Thus, by de Vries duality, $f:\beta X\to Y$ is a homeomorphism, and hence $(\func{id}_X, f)$ is an isomorphism from  $e:X\to Y$ to $s:X\to\beta X$.

(2)$\Rightarrow$(3). This follows from Theorem~\ref{prop: equivalent to beta}.

(3)$\Rightarrow$(1). Suppose that $e:X\to Y$ is equivalent to $s:X\to\beta X$. Then there is a homeomorphism $f:\beta X\to Y$ such that $f\circ s=e$. But then $f^*: \RO(Y) \to \RO(\beta X)$ is an isomorphism such that $s^{-1}\star f^*=e^{-1}$. Let $\gamma : \dv{C} \to \wp(X)$ be a de Vries extension compatible with $e^{-1}: \RO(Y) \to \wp(X)$. Then $\gamma$ is compatible with $s^{-1} : \RO(\beta X) \to \wp(X)$, so by Lemma~\ref{lem: homeomorphism}, the topology on $X$ is $\tau_\gamma$, and hence $\gamma_* : X \to Y_\dv{C}$ is a compactification. Therefore, there is a continuous $g:\beta X\to Y_\dv{C}$ such that $g\circ s=\gamma_*$.
\[
\xymatrix@C3pc{
X \ar[r]^{s} \ar[rd]_{\gamma_*} & \beta X \ar[d]^{g} \\
& Y_\dv{C}
}
\]
This yields a commutative diagram in $\dev$
\[
\xymatrix{
\RO(\beta X) \ar[rr]^{s^{-1}} && \wp(X) \\
& \dv{C} \ar[lu]^{g^* \star \zeta_\dv{C}} \ar[ru]_{\gamma} &
}
\]
Since $f^*: \RO(Y) \to \RO(\beta X)$ is an isomorphism such that $s^{-1}\star f^*=e^{-1}$, this in turn induces a commutative diagram in $\dev$
\[
\xymatrix{
\RO(Y) \ar[rr]^{e^{-1}} && \wp(X) \\
& \dv{C} \ar[lu]^{(f^*)^{-1}\star g^* \star \zeta_\dv{C}} \ar[ru]_{\gamma} &
}
\]
Thus, $e^{-1} : \RO(Y) \to \wp(X)$ is a maximal de Vries extension.
\end{proof}

\begin{remark} \label{rem: Stone-Cech}
Due to Theorem~\ref{thm: duality}, we can equivalently formulate Theorem~\ref{thm: Stone-Cech} in terms of de Vries extensions as follows. Let $\alpha : \dv{A} \to \dv{B}$ be a de Vries extension. Then the following are equivalent.
\begin{enumerate}
\item $\alpha$ is maximal,
\item  $\alpha_* : X_\dv{B} \to Y_\dv{A}$ is isomorphic in $\C$ to the Stone-\v{C}ech compactification $s : X_\dv{B} \to \beta X_\dv{B}$.
\item $\alpha_*$ is equivalent to the Stone-\v{C}ech compactification $s$.
\end{enumerate}
\end{remark}

Theorem~\ref{thm: Stone-Cech} motivates the following definition.

\begin{definition}
Let $\SC$ be the full subcategory of $\C$ consisting of Stone-\v{C}ech compatifications.
\end{definition}

Theorem~ \ref{thm: duality} and Remark~\ref{rem: Stone-Cech} immediately yield the following:

\begin{theorem} \label{cor: MDeVe = MComp}
There is a dual equivalence between $\Mdeve$ and $\SC$.
\end{theorem}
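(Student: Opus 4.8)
The plan is to obtain the dual equivalence simply by restricting the dual equivalence $({\sf E},{\sf C})$ between $\C$ and $\deve$ furnished by Theorem~\ref{thm: duality} to the full subcategories $\SC\subseteq\C$ and $\Mdeve\subseteq\deve$. Since both subcategories are full, it suffices to verify that ${\sf E}$ carries objects of $\SC$ into $\Mdeve$ and that ${\sf C}$ carries objects of $\Mdeve$ into $\SC$; the natural isomorphisms $p:1_\C\to{\sf CE}$ and $q:1_\deve\to{\sf EC}$ constructed in the proof of Theorem~\ref{thm: duality} will then restrict automatically.

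First I would record that an object of $\SC$ is a compactification $e:X\to Y$ which is (equivalent to) the Stone-\v{C}ech compactification of $X$, i.e.\ which satisfies the universal mapping property; by the classical theory recalled in Section~\ref{sec: compactifications} this is the same as saying that $e$ is equivalent to $s:X\to\beta X$. Hence, given $e:X\to Y$ in $\SC$, Theorem~\ref{thm: Stone-Cech} (the implication (3)$\Rightarrow$(1)) shows that ${\sf E}(e)=e^{-1}:\RO(Y)\to\wp(X)$ is a maximal de Vries extension, so ${\sf E}(e)\in\Mdeve$. In the other direction, given a maximal de Vries extension $\alpha:\dv{A}\to\dv{B}$, Remark~\ref{rem: Stone-Cech} (the implication (1)$\Rightarrow$(3)) shows that ${\sf C}(\alpha)=\alpha_*:X_\dv{B}\to Y_\dv{A}$ is equivalent to the Stone-\v{C}ech compactification of $X_\dv{B}$, hence is itself a Stone-\v{C}ech compactification and so ${\sf C}(\alpha)\in\SC$. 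Therefore ${\sf E}$ and ${\sf C}$ restrict to contravariant functors ${\sf E}:\SC\to\Mdeve$ and ${\sf C}:\Mdeve\to\SC$.

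To finish, I would observe that for each $e\in\SC$ the component $p_e=(\eta_X,\xi_Y)$ is an isomorphism in $\C$ between $e$ and ${\sf CE}(e)$, and by the previous paragraph ${\sf CE}(e)$ again lies in $\SC$; since $\SC$ is full, $p_e$ is an isomorphism in $\SC$, and the naturality squares, being inherited from those in $\C$, continue to commute. The same reasoning, using Tarski duality, applies to $q_\alpha=(\zeta_\dv{A},\vartheta_\dv{B})$ for $\alpha\in\Mdeve$. Thus $p$ and $q$ restrict to natural isomorphisms $1_\SC\cong{\sf CE}$ and $1_\Mdeve\cong{\sf EC}$, so ${\sf E}$ and ${\sf C}$ give a dual equivalence between $\Mdeve$ and $\SC$, as claimed.

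There is essentially no hard step here: the whole content has already been done in Theorems~\ref{thm: duality} and~\ref{thm: Stone-Cech} and Remark~\ref{rem: Stone-Cech}. The only point that demands a moment's care is to pin down what the objects of $\SC$ are---namely compactifications satisfying the universal property of $\beta X$, equivalently those equivalent to $s:X\to\beta X$---so that Theorem~\ref{thm: Stone-Cech} and Remark~\ref{rem: Stone-Cech} literally identify the objects of $\SC$ with those of $\Mdeve$ under ${\sf E}$ and ${\sf C}$; once this is in place the argument is a routine restriction of a known dual equivalence to full subcategories matched by the functors.
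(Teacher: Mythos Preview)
Your proposal is correct and is precisely the argument the paper has in mind: the paper gives no separate proof but simply notes that the result is immediate from Theorem~\ref{thm: duality} and Remark~\ref{rem: Stone-Cech}, which is exactly the restriction-of-equivalence argument you spell out. Your added care about what the objects of $\SC$ are (compactifications equivalent to $s:X\to\beta X$) is appropriate and matches the intended reading.
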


\begin{proposition} \label{lem: CReg = MComp}
There is an equivalence between $\creg$ and $\SC$.
\end{proposition}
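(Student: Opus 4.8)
The plan is to construct a functor $\creg \to \SC$ sending a completely regular space $X$ to its Stone--\v{C}ech compactification $s_X : X \to \beta X$, and a continuous map $h : X \to X'$ to the morphism $(h, \beta(h))$ in $\C$, where $\beta(h) : \beta X \to \beta X'$ is the unique continuous extension furnished by the universal mapping property of the Stone--\v{C}ech compactification (see, e.g., \cite[Thm.~3.6.1]{Eng89}); that this extension exists and makes the requisite square commute is exactly the universal property, and functoriality of $\beta$ (see, e.g., \cite[Sec.~IV.2.1]{Joh82}) gives that $\beta(h_2 \circ h_1) = \beta(h_2) \circ \beta(h_1)$ and $\beta(\func{id}_X) = \func{id}_{\beta X}$. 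Thus $(h, \beta(h))$ is a morphism in $\SC$ and the assignment is a well-defined functor, which I will call $\sf F$.

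Next I would define a functor $\sf G : \SC \to \creg$ sending a Stone--\v{C}ech compactification $e : X \to Y$ to the space $X$, and a morphism $(f, g)$ in $\SC$ to the map $f$. This is visibly a functor once we know it lands in $\creg$ (every completely regular space is the domain of a compactification), and composition is respected because composition of morphisms in $\C$ is componentwise. Clearly $\sf G \circ \sf F$ is the identity functor on $\creg$ on the nose, since $\sf G(\sf F(X)) = \sf G(s_X : X \to \beta X) = X$ and likewise on morphisms.

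It remains to produce a natural isomorphism $\sf F \circ \sf G \cong 1_{\SC}$. Given a Stone--\v{C}ech compactification $e : X \to Y$, we have $\sf F(\sf G(e)) = (s_X : X \to \beta X)$, and since $e$ is (equivalent to) a Stone--\v{C}ech compactification of $X$, there is a homeomorphism $t_e : \beta X \to Y$ with $t_e \circ s_X = e$; this is just the uniqueness of the Stone--\v{C}ech compactification up to equivalence. The pair $(\func{id}_X, t_e)$ is then an isomorphism in $\C$ from $s_X$ to $e$. For naturality, given a morphism $(f, g) : (e : X \to Y) \to (e' : X' \to Y')$ in $\SC$, one must check that the square with horizontal arrows $(\func{id}_X, t_e)$, $(\func{id}_{X'}, t_{e'})$ and vertical arrows $(f, \beta(f))$, $(f, g)$ commutes; the first coordinate is trivially $f \circ \func{id}_X = \func{id}_{X'} \circ f$, and the second coordinate requires $g \circ t_e = t_{e'} \circ \beta(f)$. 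Both $g \circ t_e$ and $t_{e'} \circ \beta(f)$ are continuous maps $\beta X \to Y'$ which, when precomposed with $s_X : X \to \beta X$, give $g \circ e = e' \circ f$ and $t_{e'} \circ \beta(f) \circ s_X = t_{e'} \circ s_{X'} \circ f = e' \circ f$ respectively; since $X$ is dense in $\beta X$ and $Y'$ is Hausdorff, two continuous maps agreeing on a dense subset are equal, so $g \circ t_e = t_{e'} \circ \beta(f)$. This establishes naturality, and hence $\sf F$ and $\sf G$ are quasi-inverse equivalences.

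The main obstacle, such as it is, is the density/Hausdorff argument in the naturality square: one has to be careful that the diagram chase really reduces to the equality of two maps out of $\beta X$ after restriction to $X$, using that $s_X$ has dense image. Everything else is bookkeeping built on the universal mapping property of $\beta$ and the fact that morphisms in $\C$ compose componentwise.
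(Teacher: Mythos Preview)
Your proof is correct and follows essentially the same route as the paper: both define the same functor sending $X$ to $s_X:X\to\beta X$ and $f$ to $(f,\beta f)$, and both hinge on the density argument that a continuous map out of $\beta X$ is determined by its restriction to $s_X[X]$. The only difference is packaging: the paper shows this functor is full and faithful (essential surjectivity being immediate from the definition of $\SC$) and invokes \cite[Thm.~IV.4.1]{Mac71}, whereas you build an explicit quasi-inverse ${\sf G}$ and verify ${\sf F}{\sf G}\cong 1_{\SC}$ directly; your naturality check uses the density argument in exactly the place where the paper uses it to prove fullness.
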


\begin{proof}
Define $\sf{B} : \creg \to \SC$ by sending a completely regular space $X$ to the Stone-\v{C}ech compactification $s_X : X \to \beta X$ and a continuous map $f : X \to Y$ to $(f,\beta f)$:
\[
\xymatrix@C5pc{
X \ar[r]^{s_X} \ar[d]_{f} & \beta X \ar[d]^{\beta f} \\
Y \ar[r]_{s_Y} & \beta Y
}
\]
The universal mapping property of $\beta $ shows that this diagram is commutative, so $(f, \beta f)$ is a morphism in $\C$. If $f : X \to Y$ and $g : Y \to Z$ are morphisms in $\creg$, then since $\beta:\creg\to\KHaus$ is a functor,
\[
{\sf B}(g\circ f) = (g\circ f, \beta(g\circ f)) = (g\circ f, \beta g\circ\beta f) = (g,\beta g)\circ(f, \beta f) = {\sf B}(g)\circ{\sf B}(f),
\]
which shows that $\sf{B}$ preserves composition. Because it is clear that $\sf{B}$ sends identity maps to identity maps, $\sf{B}$ is a functor.

It is obvious that $\sf{B}$ is faithful. To see that $\sf{B}$ is full, let $(f, g)$ be a morphism between Stone-\v{C}ech compactifications:
\[
\xymatrix@C5pc{
X \ar[r]^{s_X} \ar[d]_{f} & \beta X \ar[d]^{g} \\
Y \ar[r]_{s_Y} & \beta Y
}
\]
Then $g$ is a continuous map satisfying $g \circ s_X = s_Y \circ f$. Since $\beta f$ satisfies
$\beta f\circ s_X = s_Y \circ f$, we have that $g$ and $\beta f$ agree on the dense subspace $s_X(X)$ of $\beta X$, and so $g = \beta f$. Therefore, $(f,g) = {\sf B}(f)$, showing that $\sf{B}$ is full. Thus, $\creg$ is equivalent to $\SC$ \cite[Thm.~IV.4.1]{Mac71}.
\end{proof}

We are ready to generalize de Vries duality to the category $\creg$ of completely regular spaces and continuous maps.

\begin{theorem}\label{thm: comp reg}
There is a dual equivalence between $\creg$ and $\Mdeve$.
\end{theorem}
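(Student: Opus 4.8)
The plan is to get this dual equivalence for free by composing two results already in hand: the equivalence ${\sf B} : \creg \to \SC$ from Proposition~\ref{lem: CReg = MComp} and the dual equivalence between $\SC$ and $\Mdeve$ from Theorem~\ref{cor: MDeVe = MComp}. The underlying principle is the elementary fact that the composite of an equivalence with a dual equivalence is again a dual equivalence, and transitivity of (co/contravariant) equivalence of categories; this is the same sort of $2$-categorical bookkeeping invoked via \cite[Thm.~IV.4.1]{Mac71} in the proof of Proposition~\ref{lem: CReg = MComp}.

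In detail, I would first record that Theorem~\ref{cor: MDeVe = MComp} is realized by the (co)restrictions of the functors $\sf E$ and $\sf C$ of Theorem~\ref{thm: duality}. Indeed, by Theorem~\ref{thm: Stone-Cech} (or Remark~\ref{rem: Stone-Cech}), $\sf E$ sends an object of $\SC$ to a maximal de Vries extension, and $\sf C$ sends a maximal de Vries extension $\alpha : \dv{A} \to \dv{B}$ to the compactification $\alpha_* : X_\dv{B} \to Y_\dv{A}$, which by Theorem~\ref{thm: Stone-Cech}(3) is equivalent to the Stone-\v{C}ech compactification of $X_\dv{B}$ and hence lies in $\SC$. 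Since $\SC$ and $\Mdeve$ are full subcategories of $\C$ and $\deve$, and the natural isomorphisms $p$ and $q$ of Theorem~\ref{thm: duality} restrict to them, this yields contravariant functors ${\sf E}' : \SC \to \Mdeve$ and ${\sf C}' : \Mdeve \to \SC$ with $1_\SC \cong {\sf C}'{\sf E}'$ and $1_\Mdeve \cong {\sf E}'{\sf C}'$.

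Next I would choose a quasi-inverse ${\sf B}^{-1} : \SC \to \creg$ for the equivalence $\sf B$ of Proposition~\ref{lem: CReg = MComp}, together with natural isomorphisms $1_\creg \cong {\sf B}^{-1}{\sf B}$ and $1_\SC \cong {\sf B}{\sf B}^{-1}$ (such a quasi-inverse exists since $\sf B$ is full, faithful, and essentially surjective --- the essential surjectivity being immediate from the definition of $\SC$). Setting ${\sf F} = {\sf E}' \circ {\sf B} : \creg \to \Mdeve$ and ${\sf G} = {\sf B}^{-1} \circ {\sf C}' : \Mdeve \to \creg$, these are contravariant functors, and pasting the natural isomorphisms above gives ${\sf G}{\sf F} = {\sf B}^{-1}{\sf C}'{\sf E}'{\sf B} \cong {\sf B}^{-1}{\sf B} \cong 1_\creg$ and ${\sf F}{\sf G} = {\sf E}'{\sf B}{\sf B}^{-1}{\sf C}' \cong {\sf E}'{\sf C}' \cong 1_\Mdeve$. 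Hence $\sf F$ and $\sf G$ constitute a dual equivalence between $\creg$ and $\Mdeve$.

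There is no genuine obstacle here: the content of the theorem has already been extracted in Theorems~\ref{thm: Stone-Cech} and \ref{cor: MDeVe = MComp} and Proposition~\ref{lem: CReg = MComp}, and what remains is purely formal. The only point requiring a modicum of care is confirming that $\sf C$ genuinely lands in $\SC$ --- i.e., that ``being equivalent to a Stone-\v{C}ech compactification'' is the right reading of membership in $\SC$ --- but this is exactly what is already used in stating Theorem~\ref{cor: MDeVe = MComp}, so it may simply be cited. If one wishes to avoid naming ${\sf B}^{-1}$ at all, the statement follows even more quickly by observing that ``$\creg$ is equivalent to $\SC$ and $\SC$ is dually equivalent to $\Mdeve$, therefore $\creg$ is dually equivalent to $\Mdeve$.''
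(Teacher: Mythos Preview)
Your proposal is correct and takes essentially the same approach as the paper, which simply writes ``This follows from Theorem~\ref{cor: MDeVe = MComp} and Proposition~\ref{lem: CReg = MComp}.'' Your version just unpacks the formal composition of equivalences that the paper leaves implicit.
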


\begin{proof}
This follows from Theorem~\ref{cor: MDeVe = MComp} and Proposition~\ref{lem: CReg = MComp}.
\end{proof}

In the sequel \cite{BMO18b} we will show how to use Theorems~\ref{thm: duality} and \ref{thm: comp reg} to characterize such topological properties as normality and local compactness in terms of de Vries extensions that are subject to additional axioms.

\def\cprime{$'$}
\providecommand{\bysame}{\leavevmode\hbox to3em{\hrulefill}\thinspace}
\providecommand{\MR}{\relax\ifhmode\unskip\space\fi MR }
\providecommand{\MRhref}[2]{%
  \href{http://www.ams.org/mathscinet-getitem?mr=#1}{#2}
}
\providecommand{\href}[2]{#2}

\bigskip

\noindent Department of Mathematical Sciences, New Mexico State University, Las Cruces NM 88003

\bigskip

\noindent guram@nmsu.edu, pmorandi@nmsu.edu, olberdin@nmsu.edu


\begin{thebibliography}{10}

\bibitem{Bez10}
G.~Bezhanishvili, \emph{Stone duality and {G}leason covers through de {V}ries
  duality}, Topology Appl. \textbf{157} (2010), no.~6, 1064--1080.

\bibitem{Bez12}
\bysame, \emph{De {V}ries algebras and compact regular frames}, Appl. Categ.
  Structures \textbf{20} (2012), no.~6, 569--582.

\bibitem{BMO18b}
G.~Bezhanishvili, P.~J. Morandi, and B.~Olberding, \emph{De {V}ries duality for
  normal spaces and locally compact {H}ausdorff spaces}, submitted, 2018.

\bibitem{deV62}
H.~de~Vries, \emph{Compact spaces and compactifications. {A}n algebraic
  approach}, Ph.D. thesis, University of Amsterdam, 1962.

\bibitem{Dim10}
G.~D. Dimov, \emph{A de {V}ries-type duality theorem for the category of
  locally compact spaces and continuous maps. {I}}, Acta Math. Hungar.
  \textbf{129} (2010), no.~4, 314--349.

\bibitem{Eng89}
R.~Engelking, \emph{General topology}, second ed., Sigma Series in Pure
  Mathematics, vol.~6, Heldermann Verlag, Berlin, 1989.

\bibitem{Fed74}
V.~V. Fedorchuk, \emph{Boolean {$\delta $}-algebras and quasiopen mappings},
  Siberian Math. J. \textbf{14} (1974), 759–--767.

\bibitem{Joh82}
P.~T. Johnstone, \emph{Stone spaces}, Cambridge Studies in Advanced
  Mathematics, vol.~3, Cambridge University Press, Cambridge, 1982.

\bibitem{Mac71}
S.~MacLane, \emph{Categories for the working mathematician}, Springer-Verlag,
  New York, 1971, Graduate Texts in Mathematics, Vol. 5.

\bibitem{NW70}
S.~A. Naimpally and B.~D. Warrack, \emph{Proximity spaces}, Cambridge Tracts in
  Mathematics and Mathematical Physics, No. 59, Cambridge University Press,
  London-New York, 1970.

\end{thebibliography}
\end{document}